\author[R. G. Bettiol]{Renato G. Bettiol}
\author[P. Piccione]{Paolo Piccione}
\author[G. Siciliano]{Gaetano Siciliano}
\address{
\begin{tabular}{lll}
University of Notre Dame & &Universidade de S\~ao Paulo \\
Department of Mathematics & & Departamento de Matem\'atica \\
255 Hurley Building & & Rua do Mat\~ao, 1010 \\
Notre Dame, IN, 46556-4618, USA & & S\~ao Paulo, SP, 05508-090, Brazil\\
\emph{E-mail address}: {\tt rbettiol@nd.edu} & & \emph{E-mail address}: {\tt piccione@ime.usp.br}\\
 && \emph{E-mail address}: {\tt sicilian@ime.usp.br}
\end{tabular}
}
\title[Deforming variational problems with varying symmetries]{Deforming solutions of geometric variational problems with varying symmetry groups}
\date{\today}
\thanks{The first named author is supported by the NSF grants DMS-0941615 and DMS-1209387, USA. The second and third named authors are supported by Fapesp and CNPq, Brazil.}
\subjclass[2010]{46T05, 47J07, 58C15, 58D19}
\newcommand{\dd}{\mathrm d}
\newcommand{\vol}{\operatorname{vol}}
\newcommand{\lagsigma}{\mathcal L(\Sigma,M)}
\newcommand{\R}{\mathds R}
\newcommand{\Ric}{\operatorname{Ric}}
\newcommand{\area}{\operatorname{Area}}
\newcommand{\Vol}{\operatorname{Vol}}
\newcommand{\Iso}{\mathrm{Iso}}
\theoremstyle{plain}\newtheorem*{teo_intro}{Theorem}
\theoremstyle{definition}\newtheorem*{defin*}{Definition}
\theoremstyle{plain}\newtheorem{teo}{Theorem}[section]
\theoremstyle{plain}\newtheorem{prop}[teo]{Proposition}
\theoremstyle{plain}\newtheorem{lem}[teo]{Lemma}
\theoremstyle{plain}
\theoremstyle{definition}\newtheorem{defin}[teo]{Definition}
\theoremstyle{remark}\newtheorem{rem}[teo]{Remark}
\theoremstyle{plain} 
\theoremstyle{definition}\newtheorem{example}[teo]{Example}
\theoremstyle{plain} 
\theoremstyle{definition}
\theoremstyle{remark}
\theoremstyle{plain}\newtheorem*{propA}{Proposition}
\theoremstyle{remark}\newtheorem*{exampleA}{Example}
\numberwithin{equation}{section}
\numberwithin{teo}{section}
\begin{document}
\begin{abstract}
We prove an equivariant implicit function theorem for variational problems that are invariant under a varying symmetry group (corresponding to a bundle of Lie groups). Motivated by applications to families of geometric variational problems lacking regularity, several non-smooth extensions of the result are discussed. Among such applications is the submanifold problem of deforming the ambient metric preserving a given variational property of a prescribed family of submanifolds, e.g., constant mean curvature, up to the action of the corresponding ambient isometry groups.
\end{abstract}

\maketitle

\section{Introduction}
Geometric variational problems are typically invariant under a Lie group of symmetries. As a potpourri of examples, closed geodesics $\gamma\colon S^1\to M$ in a Riemannian (or semi-Riemannian) manifold are invariant under rotation of the parameter; constant mean curvature (CMC) hypersurfaces $x\colon N\to M$ are invariant under isometries (i.e., rigid motions) of the ambient space $M$; and harmonic maps $\phi\colon N\to M$ between Riemannian manifolds are invariant under compositions with isometries from both domain and target spaces. These symmetries create an inherent ambiguity among solutions that, a priori, are different from the analytical viewpoint, but rather indistinguishable from the geometric viewpoint. Thus, in order to study deformations of such geometric objects using analytical tools, it is essential to take into account the effect of symmetries. Deformation issues for families of geometric variational problems whose group of symmetries remains \emph{unchanged} were recently studied in \cite{BetPicSic1}, extending classic results of Dancer~\cite{Dan1,Dan2}. The goal of the present paper is to address this issue for a much richer class of deformations, for which the group of symmetries \emph{varies}, in an appropriate sense. Applications are discussed for all the above mentioned examples.

The starting point in studying such deformations is to properly formalize the concept of a smooth (or even continuous) family of symmetry groups $\mathcal G$ parametrized by $\lambda\in\Lambda$. This is achieved considering \emph{bundles of Lie groups} over $\Lambda$, which are Lie groupoids whose source and target maps coincide, see for instance \cite{dl,mac,mo,momr,momr3}. An essential (and somewhat surprising) feature of these objects is that the type of regularity that holds for bundles of Lie groups does not yield smoothness of the global geometric structure of the group, or even continuity of the topological structure, but rather smoothness of the group operations.

As a motivation and companion example, let  $G_\lambda$ be the identity component of the simply-connected $n$-dimensional space form of constant curvature $\lambda$. It is well-known that $G_\lambda$ is a Lie group of dimension $\tfrac12 n(n+1)$ and is isomorphic to $\mathsf{SO}(n,1)$, $\R^n\rtimes\mathsf{SO}(n)$ or $\mathsf{SO}(n+1)$, according to the cases $\lambda<0$, $\lambda=0$ and $\lambda>0$ respectively. Furthermore, note that varying $\lambda\in\R$, the corresponding groups $G_\lambda$ vary from being noncompact to compact as $\lambda$ becomes positive. Nevertheless, $\mathcal G=\{G_\lambda:\lambda\in\R\}$ can still be proved to be a \emph{bundle of Lie groups} (according to Definition~\ref{def:smoothliegps}). This is essentially due to the fact that the corresponding Lie algebras $\mathfrak g_\lambda$ form a smooth subbundle of $\mathfrak{gl}(n+1)$, see Proposition~\ref{thm:critsmoothfamily} and Example~\ref{ex:spaceforms}. Such $\mathcal G$ is an example of a \emph{non-locally trivial} bundle of Lie groups over $\R$. To our knowledge, this example was first studied in \cite{TasUmeYam,UmaYam}, where isometry groups of other symmetric spaces were also considered.

The main result of \cite{UmaYam} provides deformations of a family of CMC immersions of the
$2$-torus $\mathds T^2$ into $\mathds R^3$ to CMC immersions of $\mathds T^2$ into $S^3$ or $\mathds H^3$.
The starting family of immersions into $\mathds R^3$,  whose existence has been first proved by H. Wente \cite{wente1984}, is of significant importance in Differential Geometry, as it provided a long-searched counter-example to a famous conjecture of H. Hopf. Smoothness of the family of ambient isometry groups when passing from positive to negative curvature has been employed in \cite{UmaYam} to setup the framework for an implicit function theorem for CMC immersions of $\mathds T^2$ into $3$-dimensional space forms, up to rigid motions.
Similar situations in which the ambient space is deformed to yield a corresponding family of CMC (in this case, \emph{minimal}) submanifolds are found in \cite{hw2,mora}. These contributions provide, respectively, a deformation of minimal helicoids with handles from $S^2(r)\times\mathds R$ to $\mathds R^3$ as $r\to+\infty$, and of compact pieces of the Costa-Hoffman-Meeks surface in $\mathds R^3$ to $\mathds H^2\times\mathds R$. The purpose of this paper is to develop this type of deformation technique in a general variational context, dealing with perturbations of geometric variational problems that entail a change of the symmetry group.

Equipped with the above notion of varying symmetry groups, we now explain our main result. Consider a family of smooth functionals parametrized by $\lambda\in\Lambda$,
\begin{equation*}
f_\lambda\colon X\to\R, \quad f_\lambda(g_\lambda\cdot x)=f_\lambda(x)\quad  \mbox{ for all } x\in X, \, g_\lambda\in G_\lambda,
\end{equation*}
where $X$ is a Banach manifold on which the bundle of Lie groups $\mathcal G=\{G_\lambda:\lambda\in\Lambda\}$ acts by diffeomorphisms, i.e., $X$ has a $G_\lambda$-action for all $\lambda\in\Lambda$. We are interested in solutions of the equation
\begin{equation}\label{eq:critpts}
\dd f_\lambda(x)=0.
\end{equation}
Suppose the $\mathcal G$-action on $X$ is \emph{regular}, in the sense that the map
\begin{equation*}
\beta_x^\lambda\colon G_\lambda\to X, \quad \beta_x^\lambda(g)=g\cdot x,
\end{equation*}
is differentiable at the identity $1_\lambda\in G_\lambda$, for all $\lambda\in\Lambda$, and the section $(\lambda,x)\mapsto\dd\beta_x^\lambda(1_\lambda)$ is continuous.\footnote{$\dd\beta_x^\lambda(1_\lambda)$ is a section of the vector bundle $\mathrm{Hom}(\mathfrak g,TX)$ over $\Lambda\times X$, whose fiber over $(\lambda,x)$ is the space of linear maps from $\mathfrak g_\lambda$ to $T_xX$.} Denote by $\mathcal D_x^\lambda\subset T_xX$ the image of $\dd\beta_x^\lambda(1_\lambda)$, which coincides with the tangent space at $x$ to its $G_\lambda$-orbit. Note that since $f_\lambda$ is $G_\lambda$-invariant, its linearization $\dd f_\lambda\colon X\to TX^*$ is $G_\lambda$-equivariant.\footnote{Denoting by $\gamma^\lambda_g\colon X\to X$ the diffeomorphism $X\ni x\mapsto g \cdot x\in X$, where $g\in G_\lambda$, there is a action of $G_\lambda$ on $TX$, given by $g\cdot v=\dd\gamma_g^\lambda(x)v\in T_{gx}X$ for all $v\in T_xX$ and $g\in G_\lambda$; and an induced action on the cotangent bundle $TX^*$, given by $g\cdot\alpha=\alpha(g^{-1}\,\cdot)\in T_{gx}X^*$, for all $\alpha\in T_xX^*$.}
In particular, if $x\in X$ is a critical point of $f_\lambda$, then the entire $G_\lambda$-orbit of $x$ consists of critical points of $f_\lambda$. Moreover, the kernel of the second derivative $\dd^2\!f_\lambda(x)$ contains $\mathcal D_{x}^\lambda$. The critical point $x$ is called \emph{equivariantly nondegenerate} if the reverse
inclusion also holds, i.e., the kernel of $\dd^2\!f_\lambda(x)$ is the smallest possible. In this situation, our main abstract result is the following equivariant implicit function theorem:

\begin{teo_intro}
Let $(\lambda_0,x_0)\in\Lambda\times X$ be a solution of \eqref{eq:critpts}, where $x_0$ is an equivariantly nondegenerate critical point of $f_{\lambda_0}$. Assume that the second variation $\dd^2 \! f_{\lambda_0}(x_0)$ is represented by a self-adjoint Fredholm operator.\footnote{For details on these technical regularity assumptions, see Section~\ref{sec:proofregular}.} Then, there exist an open neighborhood $U\subset\Lambda$ of $\lambda_0$, an open neighborhood $V\subset X$ of $x_0$ and a map $U\ni\lambda\mapsto x_\lambda\in V$ such that $(\lambda,x)\in U\times V$ is a solution of \eqref{eq:critpts} if and only if $x$ is in the $G_\lambda$-orbit of $x_\lambda$.
\end{teo_intro}

A proof of the above result will be given in steps, according to the level of regularity assumptions made
on the group action. We stress that in many variational problems, especially those involving
actions of diffeomorphism groups (e.g., problems invariant under reparametrizations), the canonical
symmetry group \emph{does not act smoothly}. This subtle issue originates from the loss of differentiability caused by the chain rule, in an action by composition. A paradigmatic example of this situation is given by the CMC embedding problem, where the ambient isometry group acts on unparameterized embeddings, see Subsection~\ref{subsec:cmc} and \cite{BetPicSic1}.

A first proof of the above result, for the regular case, is given in Section~\ref{sec:proofregular}.
Here, the main issue concerns the appropriate notion of Fredholmness that must be used. Restricting to the realm of \emph{Hilbert} manifolds, where a standard Fredholmness assumption for $\dd^2\! f_{\lambda_0}(x_0)$ would be feasible, is not suited for most geometric variational problems that are naturally
defined on \emph{Banach} manifolds. In this situation, there is no clear Fredholmness condition that can be required for $\dd^2\! f_{\lambda_0}(x_0)$, given that in general there are no Fredholm operators from a Banach space to its dual.
In order to deal with this problem, one often uses an auxiliary pre-Hilbert structure underlying the functional space
(typically, an $L^2$-pairing), relative to which one requires Fredholmness (see Definition~\ref{def:auxFredholm}). From a functional analytical viewpoint, an interesting observation is that Fredholmness and symmetry with respect to an incomplete inner product in general  \emph{does not} imply the vanishing of the Fredholm index, as it happens for self-adjoint operators
on Hilbert spaces. This is discussed in Appendix~\ref{app}, where a criterion for the vanishing of Fredholm index of
symmetric operators is given in terms of a certain \emph{ellipticity} condition.

The essential ingredient for the proof in the regular case is the existence of a submanifold of $X$ which is
\emph{transversal} to the group orbit $G_{\lambda_0}(x_0)$ at $x_0$. As to the nonregular case, discussed in Section~\ref{sec:nonregular}, the transversality argument is replaced by a topological degree argument,
whose abstract formulation is given in Proposition~\ref{thm:nonemptyintersection_fiber}.
For such general result, it is only required that the group action is differentiable in a dense subset of $X$, which happens to be the case in a large class of
geometric variational problems. Roughly speaking, when the functional space consists of $C^k$-submanifolds $\Sigma$ of a smooth manifold $M$, and the action is given by applying diffeomorphisms of $M$, differentiability occurs at those $\Sigma$ that are $C^{k+1}$.

Several applications of the above abstract result are discussed in Section~\ref{sec:applications}, in the context of geometric variational problems. First, we deal with the above mentioned perturbation
problem of CMC hypersurfaces in families of Riemannian manifolds with smoothly varying isometry groups.
We also consider the case of \emph{immersed} submanifolds, possibly with boundary.
Second, we discuss perturbations of harmonic maps between Riemannian manifolds, where the metrics vary on both target and source manifolds. Finally, we consider a perturbation
problem for Hamiltonian stationary Lagrangian submanifolds of a symplectic manifold $(M,\omega)$.
Here, we consider a family of Riemannian metrics on $M$ whose isometry groups act in a Hamiltonian fashion on $(M,\omega)$, obtaining a perturbation result of Lagrangian submanifolds that are Hamiltonian stationary with respect to a K\"ahler metric. This improves a result recently obtained in \cite{BetPicSan12}.

Finally, a brief comment on the smoothness of families of Lie groups is in order. We point out that, despite the
fact that the general Lie algebroid and Lie groupoid theories provide the existence of smooth families
of Lie groups associated to bundles of Lie algebras, this abstract result is not suited for
the purposes of the present paper. Namely, we observe that this only provides bundles of simply-connected
Lie groups, unrelated to each other, except for the smoothness condition. Moreover, the result cannot
 be employed in order to establish whether a given family of (possibly not simply-connected) Lie groups
is smooth, which is the central point that must be addressed here.
This suggests the formulation of a smoothness criterion for families of Lie subgroups of a fixed Lie group,
that we discuss in Proposition~\ref{thm:critsmoothfamily}.
An application of this criterion (Example~\ref{ex:spaceforms}) provides an alternative and more conceptual proof of \cite[Thm~3.1]{UmaYam}.

\begin{section}{Bundles of Lie groups}
\label{sec:bundlesliegps}

As mentioned in the Introduction, we are interested in studying problems invariant under a \emph{family} of Lie groups, rather than a fixed Lie group. Such concept appears in the literature with slight variations, see \cite{coppersmith,dl,mac,mo,richardson,weinstein}.
In order to give a precise definition, recall that a \emph{Lie groupoid}\footnote{An abstract \emph{groupoid} is a category in which every arrow is invertible.} $\mathcal G$ over a (connected) smooth manifold $\Lambda$ is a smooth manifold equipped with the following data:
\begin{itemize}
\item two smooth submersions $\mathfrak s,\mathfrak t\colon \mathcal G\to \Lambda$, called \emph{source} and \emph{target} maps;\footnote{A point $g\in\mathcal G$ with $\mathfrak s(g)=x$ and $\mathfrak t(g)=y$ should be, categorically, thought of as an \emph{arrow} from $x$ to $y$.}
\item an associative \emph{composition} operation $\mathcal G\times_\Lambda\mathcal G\ni (g,h)\to g\cdot h \in\mathcal G$, which is a smooth map that associates to each pair $(g,h)$ with $\mathfrak s(g)=\mathfrak t(h)$ the composition $g\cdot h$;
\item an involution $i\colon \mathcal G\to\mathcal G$, which is a smooth map that associates to each $g\in\mathcal G$ its composition inverse $i(g)=g^{-1}$;
\item a smooth map $s_\mathrm I\colon \Lambda\to\mathcal G$, that to each $\lambda\in\Lambda$ associates to the unit $s_\mathrm I(\lambda)=1_\lambda$ for composition.
\end{itemize}

\begin{defin}\label{def:smoothliegps}
A Lie groupoid $\mathcal G$ over $\Lambda$ for which the source and target maps coincide, i.e., $\mathfrak s=\mathfrak t$, is called a \emph{(smooth) bundle of Lie groups over $\Lambda$}, or a \emph{(smooth) family of Lie groups parameterized by $\Lambda$}. In this case, we also write $\mathcal G=\{G_\lambda:\lambda\in\Lambda\}$, where $G_\lambda:=\mathfrak s^{-1}(\lambda)$ the Lie group consisting of the inverse image of $\lambda$ by $\mathfrak s$.
\end{defin}

As a first example, consider the trivial of bundle of Lie groups $\mathcal G=\Lambda\times G$, where $G$ is a Lie group and $\mathfrak s=\mathfrak t$ is the projection onto the first factor. In this case, all groups $G_\lambda$ are isomorphic to $G$. Due to a result of Weinstein~\cite{weinstein}, see also Moerdijk \cite[Sec 1.3 (c)]{mo}, the same happens for possibly nontrivial bundles of Lie groups, provided all $G_\lambda$ are compact.

\begin{prop}
A bundle of Lie groups $\mathcal G$ over $\Lambda$ such that $\mathfrak s\colon\mathcal G\to\Lambda$ is a proper map must be \emph{locally trivial}, i.e., every $\lambda\in\Lambda$ has an open neighborhood $U$ in $\Lambda$, such that $\mathfrak s^{-1}(U)=U\times G_\lambda$. In particular, bundles of \emph{compact} Lie groups are locally trivial.
\end{prop}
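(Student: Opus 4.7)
The plan is to combine Ehresmann's fibration theorem with the exponential map of the Lie algebroid of $\mathcal G$: the former yields a local trivialization of $\mathfrak s\colon\mathcal G\to\Lambda$ as a smooth fiber bundle of manifolds, while the latter converts a local trivialization of the associated bundle of Lie algebras into the sought trivialization as a bundle of Lie groups.

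Fix $\lambda_0\in\Lambda$. Since $\mathfrak s$ is a proper submersion, Ehresmann's theorem produces an open neighborhood $U\subset\Lambda$ of $\lambda_0$ and a smooth fiber-preserving diffeomorphism $\Phi\colon U\times G_{\lambda_0}\to\mathfrak s^{-1}(U)$, which may be arranged to send the identity section $\{(\lambda,1_{\lambda_0})\}$ to the identity section $\{1_\lambda\}$. The slices $\Phi_\lambda=\Phi(\lambda,\cdot)\colon G_{\lambda_0}\to G_\lambda$ are diffeomorphisms fixing the identity, but are not a priori Lie group homomorphisms; the task is to correct $\Phi$ so that they are.

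Since $\mathfrak s=\mathfrak t$, the anchor of the Lie algebroid $A\to\Lambda$ of $\mathcal G$ vanishes, so $A$ is a bundle of Lie algebras with fibers $\mathfrak g_\lambda=\mathrm{Lie}(G_\lambda)$. Each $G_\lambda=\mathfrak s^{-1}(\lambda)$ is compact, being the preimage of a point under the proper map $\mathfrak s$, so $\mathfrak g_\lambda$ admits an $\mathrm{Ad}$-invariant inner product; pushing forward the Haar measure of $G_{\lambda_0}$ via $\Phi_\lambda$ and averaging yields such inner products smoothly in $\lambda$. Transporting the brackets $[\,\cdot\,,\,\cdot\,]_\lambda$ back to $\mathfrak g_{\lambda_0}$ via the isomorphisms $\dd\Phi_\lambda(1_{\lambda_0})$ gives a smooth one-parameter deformation of the original bracket through Lie brackets of compact type of fixed dimension. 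The rigidity of compact Lie algebras — Whitehead's lemma $H^{2}(\mathfrak g;\mathfrak g)=0$ for the semisimple summand, together with the preservation of the dimension of the center forced by the $\mathrm{Ad}$-invariant pairing — implies that this deformation can be absorbed by a smooth family of linear automorphisms $\psi_\lambda$ of $\mathfrak g_{\lambda_0}$, producing a smoothly varying family of Lie algebra isomorphisms $\mathfrak g_{\lambda_0}\to\mathfrak g_\lambda$.

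Finally, the fiberwise exponentials $\exp_\lambda\colon\mathfrak g_\lambda\to G_\lambda$ intertwine Lie algebra with Lie group homomorphisms, so exponentiating these isomorphisms produces smoothly varying local Lie group isomorphisms on a neighborhood of the identity. Since each $G_\lambda$ is compact and $\pi_0(G_\lambda)$ is locally constant in $\lambda$ (another consequence of the Ehresmann trivialization), these local isomorphisms extend to global Lie group isomorphisms $G_{\lambda_0}\to G_\lambda$ that assemble into the desired trivialization of $\mathfrak s^{-1}(U)$ as a bundle of Lie groups. The chief obstacle is the Lie algebra rigidity step: bundles of Lie algebras are not locally trivial in general, but the compactness hypothesis supplies the $\mathrm{Ad}$-invariant structure needed to trivialize the bracket deformation — and this is precisely why the result fails for the non-locally-trivial examples (e.g., isometry groups of space forms) central to the paper.
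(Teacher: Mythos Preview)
The paper does not prove this proposition; it is stated as a known result attributed to Weinstein and to Moerdijk, with no argument supplied. So there is no paper proof to compare against, and your proposal must be judged on its own merits.

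The central gap is the Lie algebra rigidity step. Compact Lie algebras are \emph{not} rigid in general: for abelian $\mathfrak g=\mathds R^n$ the adjoint action is trivial and the Chevalley--Eilenberg differential vanishes, so $H^2(\mathfrak g;\mathfrak g)\cong\Lambda^2(\mathds R^n)^*\otimes\mathds R^n\neq 0$ once $n\ge 2$. Concretely, $[e_i,e_j]_\lambda=\lambda\,\varepsilon_{ijk}\,e_k$ deforms the abelian bracket on $\mathds R^3$ to $\mathfrak{so}(3)$, and \emph{both} endpoints carry $\mathrm{Ad}$-invariant inner products, so that hypothesis does not ``preserve the dimension of the center'' as you assert; with such an inner product one only gets $\mathfrak z(\mathfrak g)=[\mathfrak g,\mathfrak g]^\perp$, and $\dim[\mathfrak g,\mathfrak g]$ is merely lower semicontinuous in $\lambda$. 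You are correct that this particular deformation cannot arise from a \emph{proper} bundle of Lie groups---the compact groups integrating $\mathds R^3$ (tori) and $\mathfrak{so}(3)$ are not diffeomorphic, which Ehresmann forbids---but your argument treats Ehresmann and rigidity as independent inputs and never feeds the diffeomorphism constraint back into the bracket deformation, which is where the real work lies.

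There is a secondary gap in the integration step. A Lie algebra isomorphism $\mathfrak g_{\lambda_0}\to\mathfrak g_\lambda$ lifts to a homomorphism out of the simply connected cover $\widetilde G_{\lambda_0}$, and descends to $G_{\lambda_0}$ only if it annihilates the kernel of $\widetilde G_{\lambda_0}\to G_{\lambda_0}$; local constancy of $\pi_0(G_\lambda)$ is not the relevant obstruction, and you have not arranged any compatibility between the Lie algebra isomorphism you build and the Ehresmann diffeomorphism at the level of $\pi_1$. The proofs in the references the paper cites avoid both issues by working directly with the groupoid and using properness to average, rather than detouring through Lie algebra deformation theory.
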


Consequently, since $\Lambda$ is assumed connected, the groups $G_\lambda$ of a bundle of compact Lie groups are pairwise isomorphic.
From the viewpoint of varying families of symmetry groups that we adopt in our applications, this still is a trivial situation. Namely, the symmetry group remains unchanged (up to isomorphism). Therefore, actual nontrivial cases can only occur when some of the groups $G_\lambda$ are noncompact. In this case, the different groups $G_\lambda$ need not be isomorphic, not even homotopy equivalent (see Example~\ref{ex:spaceforms}).

\begin{rem}
Similar definitions for bundles of Lie groups can be given in other regularity contexts. For example, this concept was considered in the analytic category in \cite{richardson}. The main result of \cite{richardson} was later proved also in the smooth category~\cite{coppersmith}. For our applications, it will be useful to also have a notion of \emph{continuous} bundles of Lie groups (as opposed to the above \emph{smooth} notion), see Definition~\ref{def:continuousliegps}.
\end{rem}

\subsection{Integrating bundles of Lie algebras}
In order to produce nontrivial examples of bundles of Lie groups, we consider the problem of obtaining such bundles from bundles of Lie algebras, in a fashion similar to Lie's Third Theorem (but for a $1$-parameter family of Lie algebras).  Given a bundle of Lie groups $\mathcal G$, denote by $\mathrm{Ver}(\mathcal G)$ the \emph{vertical bundle of $\mathcal G$}, i.e., the vector bundle over $\mathcal G$ whose fiber over $g\in\mathcal G$ is $\ker\dd\pi(g)\subset T_g\mathcal G$, which is the tangent space at $g$ to $G_{\mathfrak s(g)}=\mathfrak s^{-1}(\mathfrak s(g))$. The pull-back of this vector bundle by the identity section,
\begin{equation}\label{eq:bdlliealg}
\mathrm{Lie}(\mathcal G):=s_\mathrm I^*\big(\mathrm{Ver}(\mathcal G)\big),
\end{equation}
is a vector bundle over $\Lambda$, called the \emph{bundle of Lie algebras associated to the bundle of Lie groups $\mathcal G$}. The fiber $\mathrm{Lie}_\lambda(\mathcal G)$ of \eqref{eq:bdlliealg} over $\lambda\in\Lambda$ is the Lie algebra of the Lie group $G_\lambda$. In the general theory of Lie groupoids, this construction is a special case of the \emph{Lie algebroid associated to a Lie groupoid}, see \cite[Sec 6.1]{momr}.

An abstract Lie algebroid (see \cite[Sec 6.2]{momr} for definitions) is \emph{integrable} if it is isomorphic to the Lie algebroid associated to a Lie groupoid. There exist (finite-dimensional) Lie algebroids that are not integrable \cite[Sec 6.4]{momr}, and finding general integrability criterions for Lie algebroids is a notoriously difficult problem, see \cite{cf,momr2,momr3}. Nevertheless, in the simpler context of bundles of Lie algebras, Douady and Lazard~\cite{dl} proved that \emph{any} (finite-dimensional) bundle of Lie algebras can be integrated to a bundle of Lie groups (which may be not locally trivial nor Hausdorff), cf. \cite[Ex 6.3 (2)]{momr}.
We now present a simple proof of a particular case of this result (in which the bundle obtained is Haussdorff, but possibly not locally trivial). This particular case comes from integrating bundles of Lie \emph{subalgebras} of a given Lie algebra $\widehat{\mathfrak g}$, cf. \cite{momr3}.

\begin{prop}\label{thm:critsmoothfamily}
Let $\Lambda$ be a manifold and $\widehat G$ be a Lie group with Lie algebra $\widehat{\mathfrak g}$. Let $\mathfrak g$ be a smooth subbundle of the trivial vector bundle $\Lambda\times\widehat{\mathfrak g}$ over $\Lambda$, such that, for all $\lambda\in \Lambda$, the fiber $\mathfrak g_\lambda$ is a Lie subalgebra of $\widehat{\mathfrak g}$. For all $\lambda\in \Lambda$, let $G_\lambda$ be the connected subgroup of $\widehat G$ whose tangent space at $1$ is $\mathfrak g_\lambda$, and set
\begin{equation*}
\mathcal G:=\bigcup\limits_{\lambda\in \Lambda}\left(\{\lambda\}\times G_\lambda\right)\subset \Lambda\times \widehat G.
\end{equation*}
Assume that for all $\lambda_0\in \Lambda$, there exists a submanifold $A\subset\widehat G$ and an open neighborhood $V$ of $\lambda_0$ in $\Lambda$ such that:
\begin{itemize}
\item[(a)] $1\in A$;
\item[(b)] $T_1A\oplus\mathfrak g_{\lambda_0}=\widehat{\mathfrak g}$;
\item[(c)] for all $\lambda\in V$, $G_\lambda\cap A=\{1\}$.
\end{itemize}
Then, $\mathcal G$ is a bundle of Lie groups over $\Lambda$, whose associated bundle of Lie algebras is $\mathrm{Lie}(\mathcal G)=\mathfrak g$.
\end{prop}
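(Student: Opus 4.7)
The plan is to exhibit $\mathcal{G}$ as an embedded submanifold of $\Lambda\times\widehat G$ by constructing explicit local charts, and then verify that the restrictions to $\mathcal{G}$ of the obvious groupoid operations on $\Lambda\times\widehat G$ are smooth. Fix $\lambda_0\in\Lambda$ together with a submanifold $A\subset\widehat G$ and a neighborhood $V$ of $\lambda_0$ as in (a)--(c); after shrinking $V$, assume that $\mathfrak{g}|_{V}$ admits a smooth frame $X_1(\lambda),\dots,X_k(\lambda)$, where $k=\mathrm{rank}(\mathfrak{g})$. For a sufficiently small neighborhood $W\subset\R^k$ of the origin, define
\begin{equation*}
\Phi\colon V\times W\longrightarrow \Lambda\times\widehat G,\qquad \Phi(\lambda,t)=\bigl(\lambda,\exp_{\widehat G}(t_1X_1(\lambda)+\cdots+t_kX_k(\lambda))\bigr).
\end{equation*}
Since $\exp_{\widehat G}$ sends $\mathfrak{g}_\lambda$ into $G_\lambda$, the image of $\Phi$ lies in $\mathcal{G}$; the derivative of $\Phi$ at $(\lambda_0,0)$ has image $T_{\lambda_0}\Lambda\oplus\mathfrak{g}_{\lambda_0}$, so by the inverse function theorem $\Phi$ is an injective immersion onto a $(\dim\Lambda+k)$-dimensional immersed submanifold of $\Lambda\times\widehat G$.

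The crux is to show that a neighborhood of $(\lambda_0,1)$ in $\mathcal{G}$ is entirely contained in the image of $\Phi$, which simultaneously promotes $\Phi$ to an embedding. To this end consider the auxiliary map
\begin{equation*}
\Psi\colon V\times W\times A_0\longrightarrow\Lambda\times\widehat G,\qquad \Psi(\lambda,t,a)=\bigl(\lambda,\exp_{\widehat G}(\textstyle\sum_it_iX_i(\lambda))\cdot a\bigr),
\end{equation*}
where $A_0\subset A$ is a small open neighborhood of $1$. Hypothesis (b) makes the derivative of $\Psi$ at $(\lambda_0,0,1)$ an isomorphism, so $\Psi$ is a local diffeomorphism onto an open neighborhood $\mathcal{U}\subset\Lambda\times\widehat G$ of $(\lambda_0,1)$. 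For any $(\lambda,g)\in\mathcal{U}\cap\mathcal{G}$, the unique factorization $g=\exp_{\widehat G}(\sum_it_iX_i(\lambda))\cdot a$ forces $a=\exp_{\widehat G}(-\sum_it_iX_i(\lambda))\cdot g\in G_\lambda\cap A$, and hypothesis (c) yields $a=1$. Hence $(\lambda,g)=\Phi(\lambda,t)$, proving that $\Phi$ parameterizes a full neighborhood of $(\lambda_0,1)$ in $\mathcal{G}$.

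To cover an arbitrary point $(\lambda_0,g_0)\in\mathcal{G}$, use the connectedness of $G_{\lambda_0}$ to write $g_0=\exp(Y_1^0)\cdots\exp(Y_m^0)$ with $Y_j^0\in\mathfrak{g}_{\lambda_0}$, extend each $Y_j^0$ to a local smooth section $Y_j(\lambda)$ of $\mathfrak{g}$ near $\lambda_0$ via local frames, and set $g(\lambda):=\exp(Y_1(\lambda))\cdots\exp(Y_m(\lambda))\in G_\lambda$. The map $\Theta(\lambda,g):=(\lambda,g\cdot g(\lambda)^{-1})$ is a global diffeomorphism of $\Lambda\times\widehat G$ that preserves $\mathcal{G}$ (since $g(\lambda)\in G_\lambda$ makes $g\in G_\lambda$ equivalent to $g\cdot g(\lambda)^{-1}\in G_\lambda$) and sends $(\lambda_0,g_0)$ to $(\lambda_0,1)$; composing $\Theta^{-1}$ with $\Phi$ therefore yields a chart for $\mathcal{G}$ at $(\lambda_0,g_0)$, so $\mathcal{G}$ is a smooth embedded submanifold of $\Lambda\times\widehat G$ globally, and in particular Hausdorff.

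It remains to check the groupoid axioms and identify the Lie algebra bundle. The source/target map is the restriction of the projection $\Lambda\times\widehat G\to\Lambda$, which in the charts $\Phi$ reads $(\lambda,t)\mapsto\lambda$, hence is a smooth submersion; the composition, inversion, and identity section inherit smoothness from the group operations of $\widehat G$, since each restricts to a map with image in $\mathcal{G}$. Reading the tangent space at $(\lambda,1)$ off the chart $\Phi$ shows that $\mathrm{Lie}_\lambda(\mathcal{G})=\mathrm{span}\{X_1(\lambda),\dots,X_k(\lambda)\}=\mathfrak{g}_\lambda$, as required. The principal obstacle is the surjectivity argument for $\Phi$: both (b) and (c) are indispensable, since without (b) the map $\Psi$ would fail to be a local diffeomorphism, and without (c) stray elements of $A\cap G_\lambda$ would produce points of $\mathcal{G}$ in every neighborhood of $(\lambda,1)$ that lie outside the image of $\Phi$, destroying the submanifold structure.
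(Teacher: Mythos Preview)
Your proof is correct and follows essentially the same strategy as the paper's: build a local diffeomorphism $\Psi$ (the paper's map $f$, with the two factors multiplied in the opposite order) using (a)--(b), invoke (c) to identify $\mathcal G$ locally with the slice $\{a=1\}$, and then reach an arbitrary point by translating along a smooth local section $\lambda\mapsto g(\lambda)\in G_\lambda$, which the paper records separately as a lemma. The only cosmetic differences are that you trivialize $\mathfrak g$ via a local frame while the paper works directly with the total space of $\mathfrak g$, and that your translation $\Theta$ is of course only defined over the neighborhood where the sections $Y_j(\lambda)$ exist, not globally on $\Lambda\times\widehat G$.
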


\begin{proof}
It suffices to prove that $\mathcal G$ is a submanifold of $\Lambda\times\widehat G$. Define a smooth map
\begin{equation*}
f\colon\mathfrak g\times A\to \Lambda\times\widehat G, \quad f(\lambda,X,a)=\big(\lambda,a\cdot\exp(X)\big).
\end{equation*}
For all $\lambda_0\in \Lambda$, using (b),  it is easily seen that $\dd f(\lambda_0,0,1)\colon T_{(\lambda_0,0)}\mathfrak g\oplus T_1A\to T_{\lambda_0}\Lambda\oplus\widehat{\mathfrak g}$ is
an isomorphism.\footnote{%
The tangent space at the point $(\lambda_0,0)$ of $\mathfrak g$ is canonically identified with the horizontal plus vertical direct sum $T_{\lambda_0}\Lambda\oplus\mathfrak g_{\lambda_0}$. For $v\in T_{\lambda_0}\Lambda$, $Z\in\mathfrak g_{\lambda_0}$ and $w\in T_1A$, $\mathrm df(\lambda_0,0,1)(v,Z,w)=(v,Z+w)$, and this map is an isomorphism because of (b).}
Thus, by the inverse function theorem (up to reducing the size of $A$), $f$ restricts to a diffeomorphism from
the product $W\times A$ to an open subset $B=f(W\times A)$ of $\Lambda\times\widehat G$, where $W$ is a neighborhood of $(\lambda_0,0)$ in $\mathfrak g$. We claim that the preimage $f^{-1}(\mathcal H\cap B)$ is given by $W\times\{1\}$.
Namely, $\big(\lambda,a\cdot\exp(X)\big)\in\mathcal G$ if and only if $a\cdot\exp(X)\in\mathcal G_\lambda$, and since $\exp(X)\in\mathcal G$,
this occurs if and only if $a\in G_\lambda$. By assumption (c), this implies $a=1$, proving our claim.
It follows that $\mathcal G$ is a smooth submanifold of $\Lambda\times\widehat G$ near all points of the form $(\lambda_0,1)$.

In order to determine the local structure of $\mathcal G$ at the other points, let us use the following fact, proved in Lemma~\ref{thm:existencesections} below. For all $\lambda\in \Lambda$ and all $h\in G_\lambda$, there exists a smooth function $s\colon U\to\widehat G$ defined in an open neighborhood $U$ of $\lambda$,
such that $s(\lambda)=h$ and $s(y)\in G_y$ for all $y\in U$. Fixed an arbitrary $\lambda$ and an arbitrary $h\in G_\lambda$, let $s\colon U\to\widehat G$ be any such function, and consider the diffeomorphism\footnote{$\phi_s$ is clearly smooth, and its inverse is $\phi_s^{-1}(y,g)=\big(y,s(y)^{-1}g\big)$, which is also smooth.}
\begin{equation*}
\phi_s\colon U\times\widehat G\to U\times\widehat G, \quad \phi_s(y,g)=\big(y,s(y)\cdot g\big).
\end{equation*}
Such a map carries a neighborhood of $(\lambda,1)$ to a neighborhood of $(\lambda,h)$. Moreover, it preserves $\mathcal G\cap(U\times\widehat G)$, because $s(y)\in G_y$ for all $y\in U$. Since sufficiently small neighborhoods in $\mathcal G$ of all points of the form $(\lambda,1)$ are submanifolds of $\Lambda\times\widehat G$, and $h$ is arbitrary, it follows that $\mathcal G$ is a submanifold of $\Lambda\times\widehat G$.
\end{proof}

We have used the following existence result of smooth local sections of $\mathcal G$, with prescribed value at some given point:
\begin{lem}\label{thm:existencesections}
In the notations above, for all $\lambda\in \Lambda$ and $h\in G_\lambda$, there exists an open neighborhood $U$ of $\lambda$ in $\Lambda$ and a smooth function $s\colon U\to\widehat G$ such that $s(\lambda)=h$ and $s(y)\in G_y$ for all $y\in U$.
\end{lem}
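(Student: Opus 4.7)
The plan is to factor $h$ as a finite product of exponentials of elements in $\mathfrak g_\lambda$, smoothly extend each of those Lie algebra elements to a section of $\mathfrak g$ near $\lambda$, and then define $s$ as the corresponding product of exponentials in $\widehat G$.

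First, since $G_\lambda$ is a \emph{connected} Lie subgroup of $\widehat G$ with Lie algebra $\mathfrak g_\lambda$, I would invoke the classical fact that a connected Lie group is generated, as an abstract group, by any neighborhood of the identity, and hence by $\exp(\mathfrak g_\lambda)$. This produces finitely many elements $X_1,\dots,X_k\in\mathfrak g_\lambda$ such that
\[
h=\exp(X_1)\cdot\exp(X_2)\cdots\exp(X_k),
\]
where the exponentials may be computed in the ambient Lie group $\widehat G$, since the inclusion $G_\lambda\hookrightarrow\widehat G$ is a Lie group homomorphism and hence intertwines the two exponential maps.

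Second, because $\mathfrak g$ is a smooth vector subbundle of the trivial bundle $\Lambda\times\widehat{\mathfrak g}$, local smooth frames for $\mathfrak g$ exist near $\lambda$. Writing each $X_j$ in terms of such a frame, one obtains a smooth local section $\widetilde X_j\colon U\to\mathfrak g$ with $\widetilde X_j(\lambda)=X_j$, all defined on a common open neighborhood $U$ of $\lambda$ in $\Lambda$. I would then define
\[
s(y):=\exp\!\bigl(\widetilde X_1(y)\bigr)\cdot\exp\!\bigl(\widetilde X_2(y)\bigr)\cdots\exp\!\bigl(\widetilde X_k(y)\bigr),\qquad y\in U.
\]
This $s\colon U\to\widehat G$ is smooth as a composition of smooth maps, satisfies $s(\lambda)=h$ by construction, and each factor $\exp\!\bigl(\widetilde X_j(y)\bigr)$ lies in $G_y$ (being the exponential of an element of its Lie algebra $\mathfrak g_y$); the subgroup property of $G_y$ then yields $s(y)\in G_y$ for every $y\in U$.

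I do not foresee any serious obstacle in this argument: the only non-routine input is the classical generation property of connected Lie groups invoked in the first step, and both the smooth extension of the $X_j$ and the smoothness of the resulting product are immediate from the bundle structure of $\mathfrak g$ and the smoothness of multiplication and exponentiation in $\widehat G$.
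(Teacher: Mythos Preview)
Your proposal is correct and follows essentially the same approach as the paper: factor $h$ as a finite product of exponentials using the connectedness of $G_\lambda$, extend each Lie algebra element to a smooth local section of $\mathfrak g$, and take the corresponding product of exponentials. The paper's proof is slightly terser but identical in substance.
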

\begin{proof}
Since $G_\lambda$ is connected, there exist a finite sequence $X_1,\ldots X_n$ in $\mathfrak g_\lambda$ such that $h=\exp(X_1)\cdot\ldots\cdot\exp(X_n)$, since every neighborhood of $1$ generates $G_\lambda$. Then, one can find smooth extensions $u_1,\ldots,u_n$ of the $X_i$'s to smooth local sections of the vector bundle $\mathfrak g$. The map $s(y)=\exp\big(u_1(y)\big)\cdot\ldots\cdot\exp\big(u_n(y)\big)$ is the desired smooth local section of $\mathcal G$.
\end{proof}

We now discuss an example to which the above result applies, concerning the bundle of Lie groups formed by the (identity connected component of) isometry groups of simply-connected space forms of constant curvature $\lambda\in\R$.
\begin{example}\label{ex:spaceforms}
Given $n\ge2$, set $\widehat{\mathfrak g}=\mathfrak{gl}(n+1)$ and $\widehat G=\mathsf{GL}(n+1)$. For each $\lambda\in\R$, consider the injective linear map
\begin{equation}
L_\lambda\colon \mathfrak{so}(n)\times\R^n\to\widehat{\mathfrak g}, \quad L_\lambda(D,u)=\begin{pmatrix}0&-\lambda u^\mathrm t\\u&D\end{pmatrix},
\end{equation}
where $D\in\mathfrak{so}(n)$ (i.e., $D$ is an anti-symmetric matrix), $u$ is a column vector and
$u^\mathrm t$ is the transpose of $u$. The map $\R\ni \lambda\mapsto L_\lambda\in\mathrm{Hom}\big(\mathfrak{so}(n)\times\mathds R^n,\widehat{\mathfrak g}\big)$ is smooth, thus the images of $L_\lambda$ form a smooth family of subspaces of $\widehat{\mathfrak g}$.
For all $\lambda\in\mathds R$, the image of $L_\lambda$ is a Lie subalgebra $\mathfrak g_\lambda$ of $\widehat{\mathfrak g}$ with dimension $\frac12n(n+1)$, and $\mathfrak g=\bigcup_{\lambda\in\R}(\{\lambda\}\times\mathfrak g_\lambda)$ is a smooth vector subbundle of the trivial bundle $\R\times\widehat{\mathfrak g}$.

For $\lambda\neq0$, it is easy to see that $\mathfrak g_\lambda$ is the Lie algebra of the Lie group $\mathsf{SO}(\eta_\lambda)\subset\widehat G$ of
isomorphisms of $\R^{n+1}$ that preserve the nondegenerate symmetric bilinear form $\eta_\lambda$, whose matrix in the canonical basis of $\R^{n+1}$ is:
\begin{equation*}
\eta_\lambda\cong \begin{pmatrix}\frac1{\sqrt \lambda}&0\\0&\sqrt \lambda\,\mathrm I_n\end{pmatrix}  \mbox{ for } \lambda>0, \quad \mbox{ and }\quad\eta_\lambda\cong\begin{pmatrix}\frac{-1}{\sqrt{-\lambda}}&0\\0&\sqrt{-\lambda}\,\mathrm I_n\end{pmatrix} \mbox{ for } \lambda<0,
\end{equation*}
where $\mathrm I_k$ denotes the $k\times k$ identity matrix. Set $G_\lambda:=\mathsf{SO}(\eta_\lambda)$ if $\lambda\neq0$, and $G_0:=\begin{pmatrix}1&0\\\mathds R^n&\mathsf{SO}(n)\end{pmatrix}\subset
\widehat G$. Then, we have the following isomorphisms with the isometry groups of space forms:
\begin{equation*}
G_\lambda\cong\begin{cases} \mathsf{SO}(n+1), & \mbox{(identity component of) isometry group of } S^n, \mbox{  if } \lambda>0,\\
\R^n\rtimes\mathsf{SO}(n),  &\mbox{(identity component of) isometry group of } \R^n, \mbox{  if } \lambda=0,\\
\mathsf{SO}(n,1),  &\mbox{(identity component of) isometry group of } \mathds H^n, \mbox{  if } \lambda<0.\\
\end{cases}
\end{equation*}

We claim that
\begin{equation}\label{eq:spaceformisomgps}
\mathcal G:=\bigcup_{\lambda\in\R}\left(\{\lambda\}\times G_\lambda\right)\subset\R\times\widehat G
\end{equation}
is a bundle of Lie groups over $\R$. For all $\lambda\in\mathds R$, the subspace $\mathfrak a\subset\widehat{\mathfrak g}$ given by $\mathfrak a=\begin{pmatrix}\R&\R^n\\0&\mathrm{Sym}_n\end{pmatrix}$ is a complement of $\mathfrak g_\lambda$ in $\widehat{\mathfrak g}$, where $\mathrm{Sym}_n$ denotes the space of $n\times n$ symmetric matrices. Denote by $A$ the submanifold of $\widehat G$ consisting of matrices of the form $\begin{pmatrix} a&v^\mathrm t\\0&B\end{pmatrix}$, where $a$ is a positive real number, $v$ is an arbitrary vector in $\mathds R^n$ and $B$ is a positive symmetric matrix. Clearly, $1\in A$ and $T_1A=\mathfrak a$. It is a straightforward computation\footnote{%
For $\lambda\ne0$, $G_\lambda$ consists of matrices of the form $\begin{pmatrix}a&v^\mathrm t\\w&B\end{pmatrix}$, where $a\in\R$, $v,w\in\R^n$, $B\in\mathfrak{gl}(n)$, $av+\lambda B^\mathrm tw=0$, $a^2+\lambda\Vert w\Vert^2=1$, $vv^\mathrm t+\lambda B^\mathrm tB=\lambda\, I_n$. Given $\begin{pmatrix}a&v^\mathrm t\\w&B\end{pmatrix}\in G_\lambda\cap A$,
one sees easily that it must be $w=0$, $v=0$, $a=1$ and $B^2=\mathrm I_n$. Since $B$ is (symmetric and) positive, the identity $B^2=\mathrm I_n$ implies $B=\mathrm I_n$.
Thus, $A\cap G_\lambda=\mathrm I_{n+1}$ for $\lambda\ne0$. Similarly, $A\cap G_0=\mathrm I_{n+1}$.} that $A\cap G_\lambda=\{1\}$ for all $\lambda\in\R$. Thus, the above claim follows from Proposition~\ref{thm:critsmoothfamily}.
\end{example}

\begin{rem}
Note that the above bundle of Lie groups \eqref{eq:spaceformisomgps} is not locally trivial at $0\in\R$. This is clear since $G_\lambda$ is noncompact for $\lambda\leq0$ and compact for $\lambda>0$. Moreover, $G_{\lambda_1}$ and $G_{\lambda_2}$ are not even homotopy equivalent when $\lambda_1\leq0<\lambda_2$.
\end{rem}

\begin{rem}
A totally analogous construction as in Example~\ref{ex:spaceforms} can be done for the isometry group
of symmetric spaces, as in \cite{TasUmeYam}. Let $(M,\mathbf g)$ be a compact symmetric space, and choose $p\in M$. Denote by $\mathfrak g$ the Lie algebra of the isometry group of $(M,\mathbf g)$ and by $\mathfrak k$ the Lie algebra of
the stabilizer of $p$. Consider the reductive decomposition $\mathfrak g=\mathfrak k\oplus\mathfrak m$, with $[\mathfrak k,\mathfrak k]\subset\mathfrak k$,
$[\mathfrak k,\mathfrak m]\subset\mathfrak m$ and $[\mathfrak m,\mathfrak m]\subset\mathfrak k$. Then, for $\lambda\in\mathds R$, one
considers a \emph{deformation} $[\cdot,\cdot]_\lambda$ of the Lie bracket $[\cdot,\cdot]$ of $\mathfrak g$, by setting:
\[[x,y]_\lambda=[x,y],\quad[x,v]_\lambda=[x,v],\quad[v,w]_\lambda=\lambda\cdot[v,w],\, \mbox{ for all }\,x,y\in\mathfrak k,\ v,w\in\mathfrak m.\]
Clearly, $[\cdot,\cdot]_\lambda$ is equal to $[\cdot,\cdot]$ when $\lambda=1$, while for $\lambda=-1$ it is the Lie bracket on the Lie algebra of the isometry group of the noncompact dual of $(M,\mathbf g)$. Denote by $\mathfrak g_\lambda$ the Lie algebra $\mathfrak g$ endowed with the
Lie bracket $[\cdot,\cdot]_\lambda$. Then, as in Example~\ref{ex:spaceforms}, $\mathfrak g_\lambda$ is isomorphic to $\mathfrak g_1$, $\mathfrak m\rtimes\mathfrak k$ or $\mathfrak g_{-1}$, according to the cases $\lambda>0$, $\lambda=0$ and $\lambda<0$ respectively.
\end{rem}
\end{section}

\begin{section}{The regular case}
\label{sec:proofregular}
Let $X$ be a Banach manifold, $f\colon X\to\R$ a sufficiently regular function and $x_0\in X$ a critical point of $f$.
It is generally not a convenient assumption that the second derivative $\dd^2\! f(x_0)\colon T_{x_0}X\to T_{x_0}X^*$
is a Fredholm map. Namely, most Banach spaces do not admit \emph{any} Fredholm operator to their dual.
On the other hand, in the cases of interest for applications to geometric variational problems, second derivatives are represented by differential operators that \emph{are} Fredholm when considered \emph{between suitable spaces}.
For instance, strongly elliptic self-adjoint differential operators of order $d$, defined on the space
of sections of some vector bundle $E$, are Fredholm operators of index $0$ from $\Gamma^{k,\alpha}(E)$
to $\Gamma^{k-d,\alpha}(E)$, for every $d\leq k$ and every $\alpha\in\left]0,1\right[$, where $\Gamma^{k,\alpha}(E)$ denotes the space of sections of $E$ of class $C^{k,\alpha}$, see for instance \cite{Whi2}.
A natural abstract formulation of such Fredholmness property is given as follows.

\begin{defin}\label{def:auxFredholm}
Given a Banach manifold $X$ and a $C^{k+1}$-function $f\colon X\to\R$, $k\ge1$,
an \emph{auxiliary Fredholm structure} for $(X,f)$ consists of:
\begin{itemize}
\item a smooth Banach bundle $\mathcal E$ on $X$, with continuous inclusion $TX\subset\mathcal E$;
\item a (possibly non-complete) inner product $\langle\cdot,\cdot\rangle_x$ on each fiber $\mathcal E_x$, for all $x\in X$,
\end{itemize}
satisfying the following properties:
\begin{itemize}
\item there exists a $C^k$-section $\delta f\colon X\to\mathcal E$ such that $\mathrm df(x)=\langle\delta f(x),\cdot\rangle_x$
for all $x\in X$ (in particular, $\mathrm df(x)=0$ if and only if $\delta f(x)=0$);
\item for all critical points $x_0\in X$, there exists a Fredholm operator $J_{x_0}\colon T_{x_0}X\to\mathcal E_{x_0}$
of index $0$, such that
\begin{equation}\label{eq:abstractJacobi}
\mathrm{Hess}^f(x_0)[v,w]=\langle J_{x_0}(v),w\rangle_{x_0},\quad\mbox{ for all } v,w\in T_{x_0}X,
\end{equation}
where $\mathrm{Hess}^f(x_0)$ is the bounded symmetric
bilinear form on $T_{x_0}X$ given by the Hessian of $f$ at $x_0$.
\end{itemize}
\end{defin}

Note that $\ker \big(\mathrm{Hess}^f(x_0)\big)=\ker(J_{x_0})$, and that the map $\delta f$ is a \emph{gradient-like} map for $f$.
From \eqref{eq:abstractJacobi}, $J_{x_0}$ is symmetric with respect to $\langle\cdot,\cdot\rangle_{x_0}$. Nevertheless, this symmetry alone does not imply in general that $J_{x_0}$ has Fredholm index $0$, see Appendix~\ref{app} for a thorough discussion. In typical applications, $\delta f$ is a quasi-linear elliptic differential operator between suitable spaces and $J_{x_0}$ is its linearization, i.e., a linear elliptic
differential operator, for which the above assumptions are satisfied.
\begin{rem}
For all $x_0\in X$, denote by $0_{x_0}$ the zero of the fiber $\mathcal E_{x_0}$, and by
$\pi_{\mathrm{ver}}(x_0)\colon T_{0_{x_0}}\mathcal E\to\mathcal E_{x_0}$ the vertical projection.
Given a critical point $x_0$ of $f$, i.e., $\delta f(x_0)=0_{x_0}$, the derivative $\mathrm d(\delta f)(x_0)$ is a linear map from $T_{x_0}X$ to $T_{0_{x_0}}\mathcal E$. The operator $J_{x_0}$ above can be defined as the \emph{vertical derivative} of
$\delta f$ at $x_0$, given by:
\begin{equation}\label{eq:ddfjacobi}
\pi_{\mathrm{ver}}(x_0)\circ\big(\mathrm d(\delta f)(x_0)\big)\colon T_{x_0}X\longrightarrow\mathcal E_{x_0}.
\end{equation}
\end{rem}

Finally, let us give a fiber bundle version of a result in \cite{BetPicSic1} on the intersection of Banach submanifolds. Aiming at applications to not necessarily smooth group actions, we only assume differentiability along one fiber.

\begin{prop}\label{thm:nonemptyintersection_fiber}
Let $\mathcal M$ be the total space of a fiber bundle with base $A$. For all $a\in A$, denote by $M_a$
the fiber over $a$, which is assumed to be a finite-dimensional manifold.
Let $N$ be a (possibly infinite-dimensional) Banach manifold,
and $Q\subset N$ a Banach submanifold. Assume that $\chi\colon\mathcal M\to N$ is a continuous
function such that there exists $a_0\in A$ and $p_0\in M_{a_0}$ with:
\begin{itemize}
\item[(a)] $\chi(p_0)\in Q$;
\item[(b)] $\chi_{a_0}=\chi\big\vert_{M_{a_0}}\!\!\colon M_{a_0}\to N$ is of class $C^1$;
\item[(c)] $\mathrm d\chi_{a_0}\big(T_{p_0}M_{a_0}\big)+T_{\chi(p_0)}Q=T_{\chi(p_0)}N$.
\end{itemize}
Then, for $a\in A$ near $a_0$, $\chi(M_a)\cap Q\ne\emptyset$.
\end{prop}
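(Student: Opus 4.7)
The plan is to reduce the transversal intersection problem to a finite-dimensional one by combining a local trivialization of the fiber bundle with a submanifold chart for $Q$, and then to conclude via Brouwer degree theory, which is the appropriate substitute for the implicit function theorem when only $C^0$-regularity along the base direction is available.

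First I would pick a local trivialization $\phi\colon U\times F\to\mathcal M\big|_U$ of the bundle over an open neighborhood $U$ of $a_0$, with $\phi(a_0,q_0)=p_0$, and set $\widetilde\chi:=\chi\circ\phi\colon U\times F\to N$. By continuity of $\chi$ and since $\phi\big|_{\{a_0\}\times F}\colon F\to M_{a_0}$ is a diffeomorphism, $\widetilde\chi$ is continuous and $\widetilde\chi(a_0,\cdot)$ is of class $C^1$. Next, condition (c) combined with $\dim M_{a_0}<\infty$ forces $T_{\chi(p_0)}Q$ to have finite codimension, say $k$, in $T_{\chi(p_0)}N$; being closed, it admits a closed $k$-dimensional topological complement $W_0$. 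Since $Q$ is a Banach submanifold, there is a submanifold chart $\Psi\colon V\to U_Q\times W$ of $N$ near $\chi(p_0)$, with $V\subset N$ open, $U_Q$ an open neighborhood of $\chi(p_0)$ in $Q$, and $W\subset W_0$ an open neighborhood of $0$, satisfying $\Psi(V\cap Q)=U_Q\times\{0\}$. Writing $\pi\colon U_Q\times W\to W$ for the second projection, I would then define the defect map
\[\psi(a,q)\;:=\;\pi\bigl(\Psi(\widetilde\chi(a,q))\bigr)\;\in\;W_0\]
on an open neighborhood of $(a_0,q_0)$ on which $\widetilde\chi$ takes values in $V$. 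It is continuous with values in the finite-dimensional space $W_0$, satisfies $\psi(a_0,q_0)=0$, the restriction $\psi(a_0,\cdot)$ is $C^1$, and condition (c) rephrases exactly as surjectivity of $\dd\psi(a_0,\cdot)(q_0)\colon T_{q_0}F\to W_0$.

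Next I would run a local Brouwer degree argument. Pick a $k$-dimensional subspace of $T_{q_0}F$ on which $\dd\psi(a_0,\cdot)(q_0)$ restricts to a linear isomorphism onto $W_0$, and a small $C^1$-embedded closed ball $B$ in $F$ through $q_0$ tangent to this subspace at $q_0$. After shrinking, the inverse function theorem makes $\psi(a_0,\cdot)\big|_B$ a diffeomorphism onto a closed neighborhood of $0$ in $W_0$, so $0\notin\psi(a_0,\cdot)(\partial B)$ and the Brouwer degree $\deg\bigl(\psi(a_0,\cdot)\big|_B,0\bigr)=\pm 1$. Continuity of $\widetilde\chi$ and compactness of $B$ yield $\psi(a,\cdot)\big|_B\to\psi(a_0,\cdot)\big|_B$ uniformly on $B$ as $a\to a_0$, so that for $a$ close enough to $a_0$ the affine homotopy in $W_0$ between these maps stays away from $0$ on $\partial B$. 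Homotopy invariance of the Brouwer degree then gives $\deg\bigl(\psi(a,\cdot)\big|_B,0\bigr)=\pm 1\ne 0$, so some $q_a\in B$ satisfies $\psi(a,q_a)=0$. Setting $p_a:=\phi(a,q_a)\in M_a$ and unwinding the definition of $\Psi$, one obtains $\chi(p_a)\in Q$, proving $\chi(M_a)\cap Q\ne\emptyset$.

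I expect the most delicate step to be extracting a finite-dimensional target out of the transversality hypothesis: codimensionality of $Q$ in $N$ is not assumed a priori, and the argument only succeeds because (c) combined with finite-dimensionality of $M_{a_0}$ automatically supplies it. Once this is in place, the local product chart for $Q\subset N$ exists since finite-codimensional closed subspaces of Banach spaces are always complemented, and the degree-theoretic finish is standard; its purely $C^0$-character is exactly what allows the argument to tolerate the absence of any regularity of $\chi$ in the parameter $a$.
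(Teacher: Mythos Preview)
Your proposal is correct and follows essentially the same approach as the paper: reduce to a local product via a trivialization of the bundle, then conclude by a topological degree argument exploiting the transversality at $(a_0,p_0)$ and mere continuity in the parameter $a$. The paper's own proof only records the trivialization step and then delegates the degree argument to \cite[Prop.~3.4]{BetPicSic1}, whereas you have spelled out explicitly the reduction to a finite-dimensional target (via the observation that (c) forces $T_{\chi(p_0)}Q$ to have finite codimension) and the Brouwer degree computation; this is exactly the content of the cited result.
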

\begin{proof}
Given the local character of the result, we can use a trivialization of the fiber bundle $\mathcal M$ around
$a_0$, and assume that $\mathcal M=A\times M$, where $M$ is a finite-dimensional manifold diffeomorphic to the fibers.
For this case, the proof of the result is given in \cite[Prop~3.4]{BetPicSic1}, using a topological degree argument.
Note that transversality arguments \emph{cannot be used} here, since we are not assuming differentiability of $\chi$ along
\emph{all} the fibers in a neighborhood of $a_0$.
\end{proof}

Given Banach spaces $E$ and $F$, denote by $\mathrm{Hom}(E,F)$ the space of bounded linear operators from $E$ to $F$, and for $T_j\in\mathrm{Hom}(E_j,F)$, define $(T_1\oplus T_2)(e_1,e_2):=T_1(e_1)+T_2(e_2)$.

\begin{lem}
\label{thm:opensurjective}
Let $E_1$, $E_2$ and $F$ be Banach spaces. The following is open in $\mathrm{Hom}(E_1,F)\times\mathrm{Hom}(E_2,F)$:
\begin{multline*}\big\{(T_1,T_2)\in\mathrm{Hom}(E_1,F)\times\mathrm{Hom}(E_2,F): (T_1\oplus T_2)\colon E_1\oplus E_2\to F\\ \text{is surjective and has complemented kernel}\big\}.\end{multline*}
\end{lem}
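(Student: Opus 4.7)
The strategy is to reduce the statement to the standard openness of the set of \emph{split epimorphisms} (bounded operators admitting a bounded right inverse). The map
\[
\Phi\colon\mathrm{Hom}(E_1,F)\times\mathrm{Hom}(E_2,F)\longrightarrow\mathrm{Hom}(E_1\oplus E_2,F),\qquad (T_1,T_2)\longmapsto T_1\oplus T_2,
\]
is linear and continuous (in fact an isometry for the natural norms), so it suffices to prove that the set
\[
\mathcal S:=\bigl\{T\in\mathrm{Hom}(E_1\oplus E_2,F):T \text{ is surjective with complemented kernel}\bigr\}
\]
is open, and then take the preimage under $\Phi$.

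To see that $\mathcal S$ is open, I would use the characterization of its elements as those $T$ admitting a bounded right inverse. If $T\in\mathcal S$ and $K$ is a closed complement of $\ker T$, then $T|_K\colon K\to F$ is a continuous bijection and hence, by the open mapping theorem, a Banach space isomorphism; composing $(T|_K)^{-1}$ with the inclusion $K\hookrightarrow E_1\oplus E_2$ yields $S\in\mathrm{Hom}(F,E_1\oplus E_2)$ with $TS=I_F$. Conversely, any $T$ possessing a bounded right inverse $S$ is surjective and admits the decomposition $E_1\oplus E_2=\ker T\oplus S(F)$, where $S(F)$ is closed (since $S$ is a topological embedding onto its image), so $\ker T$ is complemented.

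Now fix $T\in\mathcal S$ with right inverse $S$. For any $T'\in\mathrm{Hom}(E_1\oplus E_2,F)$ with $\|T'-T\|<\|S\|^{-1}$, one has $\|T'S-I_F\|=\|(T'-T)S\|<1$, so $T'S$ is invertible in $\mathrm{Hom}(F,F)$ by the Neumann series. Then $S':=S(T'S)^{-1}$ is a bounded right inverse of $T'$, so $T'\in\mathcal S$. This exhibits an open norm-ball around $T$ contained in $\mathcal S$, proving openness. No step in this argument is an obstacle; the only subtlety worth flagging is the use of the open mapping theorem to pass from ``complemented kernel'' to ``existence of a bounded right inverse'', and the elementary Neumann series stability, both of which are standard.
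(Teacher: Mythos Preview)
Your proof is correct and follows the same approach as the paper: reduce to the openness of the set of surjective operators with complemented kernel in $\mathrm{Hom}(E_1\oplus E_2,F)$, then pull back via the continuous linear map $(T_1,T_2)\mapsto T_1\oplus T_2$. The paper simply cites the openness of this set as well-known, whereas you supply the standard argument via bounded right inverses and the Neumann series; no discrepancy in method or content.
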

\begin{proof}
Set $E=E_1\oplus E_2$; it is well-known that the set of $T\in\mathrm{Hom}(E,F)$ that are surjective and have complemented kernel is open in $\mathrm{Hom}(E,F)$. Moreover, the map $\mathrm{Hom}(E_1,F)\times\mathrm{Hom}(E_2,F)\ni (T_1,T_2)\mapsto T_1\oplus T_2\in\mathrm{Hom}(E,F)$
is linear and continuous.
\end{proof}

We now give the proof of the Theorem in the Introduction, when all the above regularity hypotheses are fulfilled.

\begin{proof}[Proof (Regular case)]
Recall that we are denoting by $\mathcal D_x^\lambda$ the image of the linear map
$\mathrm d\beta_x^\lambda(1)\colon\mathfrak g_\lambda\to T_xX$. Our assumptions on the regularity of the action
imply that $\mathrm d\beta_x^\lambda(1)$ depends continuously on $(x,\lambda)$.
We claim that we can find a Banach submanifold $S\subset X$ through $x_0$, satisfying the following properties:
\begin{enumerate}
\item\label{itm:S1} $T_{x_0}S\oplus\mathcal D_{x_0}^{\lambda_0}=T_{x_0}X$;
\item\label{itm:S2} $T_xS+\mathcal D_x^\lambda=T_xX$ for all $x\in S$ and all $\lambda$ in a neighborhood $U\subset\Lambda$ of $\lambda_0$;
\item\label{itm:S3} there exists a neighborhood $\widetilde V\subset X$ of $x_0$ such that $G_\lambda\cdot S\supset\widetilde V$ for all $\lambda\in U$.
\end{enumerate}
The existence of such $S$ can be argued as follows. First, observe that $\mathcal D_{x_0}^{\lambda_0}$ has finite dimension, thus it is complemented
in $T_{x_0}X$; let $\mathcal S$ be any closed complement of $\mathcal D_{x_0}^{\lambda_0}$ in $T_{x_0}X$, and let $S\subset X$ be any smooth
submanifold through $x_0$ with $T_{x_0}S=\mathcal S$. Clearly, for such $S$, equality \eqref{itm:S1} holds.

Moreover, we claim that the set of $(\lambda,x)\in\Lambda\times S$ for which $T_xS+\mathcal D_x^\lambda=T_xX$ is open,
which implies easily that, by possibly taking a smaller $S$, also property \eqref{itm:S2} holds.
To prove the claim, use Lemma~\ref{thm:opensurjective} applied to the following pair of operators: $T_1=T_1(x,\lambda)=\mathrm d\beta_x^\lambda(1)\colon\mathfrak g_\lambda\to T_xX$,
and $T_2=T_2(x)\colon T_xS\to T_xX$ the inclusion of $T_xS$ into $T_xX$. By using suitable trivializations of the bundles $\mathfrak g=\bigcup_\lambda(\{\lambda\}\times\mathfrak g_\lambda)$
and $TX$ near $\lambda_0$ and $x_0$, clearly one can assume that the domains and the range of these operators are fixed Banach spaces.
Observe that $T_xS+\mathcal D_x^\lambda=T_xX$ is equivalent to $T_1(x,\lambda)+T_2(x)$ being surjective. On the other hand, the  kernel of $T_1(x,\lambda)+T_2(x)$
is always complemented, because it is finite-dimensional. Namely, $T_2(x)$ is injective, and the domain of $T_1(x,\lambda)$ is finite-dimensional.
Hence, Lemma~\ref{thm:opensurjective} implies that the set of $(\lambda,x)\in\Lambda\times S$ for which $T_xS+\mathcal D_x^\lambda=T_xX$ is open.

Property \eqref{itm:S3} holds if we show that $\beta_x^\lambda(G_\lambda)\cap S\ne\emptyset$, for $(\lambda,x)$ sufficiently close to $(\lambda_0,x_0)$. In the regular case that is being considered here, this follows easily by a transversality argument. Namely, by the continuity of
$\mathrm d\beta_x^\lambda$, property \eqref{itm:S1} implies that the map $\beta_x^\lambda$ is transversal to $S$ for $(\lambda,x)$ sufficiently close to $(\lambda_0,x_0)$.

Once the existence of a submanifold $S$ for which the above properties hold has been established, the proof of our result
is obtained from the following argument. Property \eqref{itm:S2} implies that, for $\lambda\in U$, the critical points of
the restriction $f_\lambda\big\vert_S\colon S\to\mathds R$ are in fact critical points of $f_\lambda$. By property~\eqref{itm:S3},
for all $\lambda\in U$ and all $y\in\widetilde V$, $\dd f_\lambda(y)=0$ if and only if
there exists $z\in S$, with $y\in G_\lambda\cdot x$, such that $\dd (f_\lambda\vert_S)(z)=0$.
Thus, the equivariant implicit function theorem is reduced to the standard implicit functions theorem for the restriction
$f_\lambda\vert_S$.

From here, the rest of the proof goes along the same line as the proof of the equivariant implicit function theorem
with fixed group of symmetries, as presented in \cite{BetPicSic1}. This part of the proof, that will be omitted here, uses
property~\eqref{itm:S1} of $S$, the gradient-like maps $\delta f_\lambda$, the vertical derivative of $\delta f_{\lambda_0}$ at $x_0$,
and the equivariant nondegeneracy assumption.
\end{proof}
\end{section}

\begin{section}{The non-regular case}
\label{sec:nonregular}
In several interesting situations arising from geometric variational problems, the existence of an implicit function
can only be obtained in a framework without the regularity properties assumed in Section~\ref{sec:proofregular}. We use the example of the constant mean curvature (CMC) variational problem to address some of those regularity issues, and then proceed to a case-by-case discussion (see Subsections~\ref{sub:nonreggroupactions}, \ref{subsec:actionhomeos}, \ref{subsec:ctsbundleliegps}) of how to weaken various hypotheses of the result proved in Section~\ref{sec:proofregular}.

In the CMC variational problem, one searches for minimizers of the area functional subject to a volume constraint, in the space of codimension $1$ \emph{unparametrized embeddings}\footnote{i.e., embeddings modulo reparametrizations.} (or immersions) of a compact manifold $M$ in a Riemannian manifold $\overline M$, see Subsection~\ref{subsec:cmc} for details. Equivariance in this scenario comes from the left-composition action of the isometry group of $\overline M$ on the space of embeddings of $M$ into $\overline M$. There are several technical issues concerning this problem:
\begin{itemize}
\item the space $X$ of $C^{k,\alpha}$ unparametrized embeddings of $M$ into $\overline M$ is not a differentiable manifold. It only admits a natural atlas of local charts, which are \emph{not} differentiably compatible;
\item although the isometry group of $\overline M$ is a Lie group, its action on $X$ is only continuous, and differentiable only on a dense subset (consisting of smooth, i.e., $C^\infty$, embeddings);
\item due to topological reasons, it may be convenient to not define the group action globally on $X$, but rather describe it as a \emph{local action}.
\end{itemize}
A detailed discussion of the first two points above can be found in \cite{AliPic10}.
The question of locality for the group action is basically a matter of technicalities, and dealing with this situation entails
no essential modification of the theory presented here. The interested reader may find a discussion of local actions
in the context of our equivariant implicit function theory in \cite{BetPicSic1}.
Let us simply emphasize here that an important consequence of extension to local actions is the corresponding
weakening on the assumption that $X$ admits a global differentiable structure. Namely, if $G$ is a group that acts
on $X$, then one has a local action of $G$ on any open subset of $X$. In particular, for the validity of the equivariant
implicit function theorem, it will be enough to have that $X$ admits an atlas of local charts that are only continuously
compatible, i.e., an atlas defining a global \emph{topological manifold} structure on $X$, provided that the relevant functions
be differentiable in each local chart.

This seemingly unusual situation actually occurs quite often in problems invariant under reparametrizations. In the case at hand, the space $X$ of $C^{k,\alpha}$ unparametrized embeddings of $M$ into $\overline M$ admits a local parametrization near $x_0\in X$ in terms of sections of the normal bundle of $x_0$; and it is well-known that two such local parametrizations are only continuously (and not differentiably) compatible. Nevertheless, smooth action functionals defined on the space of embeddings, that are invariant under diffeomorphisms of the source manifold, define continuous functions on the quotient space of unparametrized embeddings, that are also smooth in every local chart. This property is discussed thoroughly in \cite{AliPic10}.

\subsection{Non-regular group actions}
\label{sub:nonreggroupactions}
An important issue is the lack of differentiability of the maps $\beta_x^\lambda(g)=g\cdot x$, for all $x$. In typical geometric applications, $x$ is a given map of class $C^{k,\alpha}$ and $\beta^\lambda_x$ is differentiable (at $1_\lambda$) only when $x$ has higher regularity, say $C^\infty$. The set of such $x$ is in general dense, and the corresponding (densely defined) map $(x,\lambda)\mapsto\mathrm d\beta_x^\lambda(1_\lambda)$ admits a continuous extension to all $x$, as a section of an appropriate bundle. This will be discussed in concrete examples in the sequel; here we give abstract axioms to deal with this situation.

The regularity assumption for the action of $\mathcal G$ on $X$ in the equivariant implicit function theorem
can be replaced with the assumption that, for all $\lambda\in\Lambda$, the map $\beta_x^\lambda$ is differentiable at $1_\lambda$ for all
 $x$ belonging to a dense subset $X'\subset X$ containing $x_0$, and  assuming the existence of a $C^k$-vector bundle $\mathcal Y$ over $X$,
 together with $C^k$-bundle morphisms
$j\colon\mathcal E\to\mathcal Y^*$ and $\kappa\colon TX\to\mathcal Y$, satisfying the following properties:
\begin{itemize}
\item $\kappa$ is injective;
\item $\kappa^*\circ j\colon \mathcal E\to TX^*$ coincides with the inclusion $\mathcal E$ in $TX^*$ via the inner product
$\langle\cdot,\cdot\rangle$ (from which it follows that also $j$ must be injective);
\item the map $\Lambda\times X'\ni(\lambda,x)\mapsto\kappa_x\circ\,\dd\beta_x^\lambda(1_\lambda)\in\mathrm{Hom}(\mathfrak g_\lambda,\mathcal Y_x)$
has a continuous extension to a section of the vector bundle $\mathrm{Hom}(\mathrm{Lie}(\mathcal G),\mathcal Y)$ over $\Lambda\times X$.
\end{itemize}
With the above setup, one proves the existence of a submanifold $S$ of $X$ through $x_0$, such that $T_{x_0}S$
is a complement to $\mathcal D_{x_0}^{\lambda_0}$, and such that the standard implicit function theorem holds
for the restriction of $f_\lambda$ to $S$ around $(x_0,\lambda_0)$.
The proof of this fact is totally analogous to the proof of the same result in the case of fixed group of symmetries
discussed in \cite{BetPicSic1}, and will be omitted here. The argument is concluded by showing
the existence of a neighborhood
$\widetilde V\subset X$ of $x_0$ such that $G_\lambda\cdot S\supset\widetilde V$ for all $\lambda\in U$;
for this, the transversality argument used in the regular case cannot be used here.
Instead, the existence of such $\widetilde V$ is obtained
from Proposition~\ref{thm:nonemptyintersection_fiber}, applied to the following setup:
$A=\Lambda\times X$, $\mathcal M$ is the fiber bundle over $\Lambda\times X$ whose fiber over $(\lambda,x)$ is $G_\lambda$,
$a_0=(\lambda_0,x_0)$,
$p_0=\big((\lambda_0,x_0),1_{\lambda_0}\big)$, $N=X$, $Q=S$, and $\chi\big((\lambda,x),g\big)=\beta_x^\lambda(g)=g\cdot x$.
Assumption (b) of Proposition~\ref{thm:nonemptyintersection_fiber} is satisfied, since
by assumption $x_0\in X'$ and therefore $\chi\big((\lambda_0,x_0),\cdot\big)=\beta_{x_0}^{\lambda_0}$ is differentiable.
Assumption (c) holds because $T_{x_0}S$ is a complement to $\mathcal D_{x_0}^{\lambda_0}$.

\subsection{Actions by homeomorphisms}\label{subsec:actionhomeos}
In the proof of our equivariant implicit function theorem, we have used the assumption that the group actions
on $X$ are by diffeomorphisms in order to guarantee that, given $\lambda\in\Lambda$ and a critical point $x$ of
$f_\lambda$, then the $G_\lambda$-orbit of $x$ consists of critical points of $f_\lambda$. Namely, the proof
of this fact involves the derivative of the map $x\mapsto g\cdot x$, which in principle does not exist if
the action of $G_\lambda$ on $X$ is assumed to be only by homeomorphisms.
Thus, when dealing with actions by homeomorphisms, an explicit invariance property for critical points has to be assumed:
for all $\lambda$, the set $\big\{x\in X:\dd f_\lambda(x)=0\big\}$ is $G_\lambda$-invariant. Having such assumption satisfied,
the equivariant implicit function theorem holds also in the case of actions by homeomorphisms.

In practical terms, one obtains invariance of the set of critical points when the $G_\lambda$-action on $X$ lifts\footnote{i.e., there is a left-action of $G_\lambda$ on $\mathcal E$, such that the projection $\mathcal E\to X$ is equivariant with respect to such action.} to an action on the vector bundle $\mathcal E$ introduced in Section~\ref{sec:proofregular}, by linear isomorphisms on the fibers, and the gradient-like map $\delta f_\lambda\colon X\to\mathcal E$ is equivariant
with respect to such action. In such a situation, the null section of $\mathcal E$ is fixed by the $G_\lambda$-action. Thus, the set of critical points of $f_\lambda$, which coincides with the set of zeroes of the gradient-like map $\delta f_\lambda$, is $G_\lambda$-invariant.

\subsection{Actions of continuous bundles of Lie groups}\label{subsec:ctsbundleliegps}
While the existence of a Lie group structure on each symmetry group $G_\lambda$ is a central point in our theory, it may be interesting to weaken the regularity assumption on the dependence on the parameter $\lambda$.
More precisely, one can consider \emph{continuous}, rather than smooth, bundles of Lie groups (cf. Definition~\ref{def:smoothliegps}).

\begin{defin}\label{def:continuousliegps}
Let $\Lambda$ be a topological space, and, for all $\lambda\in\Lambda$, let $G_\lambda$ be a Lie group.
The set $\mathcal G=\bigcup_{\lambda\in\Lambda} (\{\lambda\}\times G_\lambda)$ is a \emph{continuous bundle of Lie groups} over $\Lambda$ if the following properties are satisfied:
\begin{itemize}
\item $\mathcal G$ is a groupoid with a topology whose restriction to each fiber $\{\lambda\}\times G_\lambda$ coincides with the topology of $G_\lambda$;
\item the composition operation $\mathcal G\times_\Lambda\mathcal G\ni (g,h)\to g\cdot h \in\mathcal G,$ is continuous;
\item the involution $i\colon \mathcal G\ni g\mapsto g^{-1}\in\mathcal G$ is continuous;
\item there exist \emph{local trivializations} of $\mathcal G$, in the following sense: for all $\lambda_0\in\Lambda$, there exists a neighborhood
$V$ of $\lambda_0$ in $\Lambda$, a neighborhood $U$ of $1_{\lambda_0}$ in $G_{\lambda_0}$,
a neighborhood $Z$ of $(\lambda_0,1_{\lambda_0})$ in $\mathcal G$  and a homeomorphism
$h\colon V\times U\to Z$ such that $h(\lambda_0,g)=(\lambda_0,g)$ for all $g\in U$ and for which the following diagram commutes:
\begin{equation}\label{eq:fibersontofibres}
\xymatrix@-5pt{V\times U\ar[rr]^h\ar[dr]_{\pi_1}&&Z\ar[ld]^\pi\cr&V}
\end{equation}
where $\pi_1\colon V\times U\to V$ is the projection onto the first variable, and $\pi\colon\mathcal G\to\Lambda$ is the natural projection;
\item commutativity of \eqref{eq:fibersontofibres} means that $h$ carries \emph{fibers} $\{\lambda\}\times U\subset V\times G_{\lambda_0}$ onto fibers $G_\lambda\cap Z\subset\mathcal G$. It is required that the restriction of $h$ to each fiber be differentiable, and
that the fiber derivative $\partial_2h\colon U\times TG_{\lambda_0}\to T\mathcal G$ be a continuous map.
\end{itemize}
\end{defin}

\begin{rem}
It is not hard to prove that a \emph{smooth} bundle of Lie groups, as in Definition~\ref{def:smoothliegps}, is also a
\emph{continuous} bundle of Lie groups. The existence of local trivializations for a smooth bundle of Lie groups
is obtained easily using standard results regarding the local form of submersions, applied to the source map
$\mathfrak s\colon\mathcal G\to\Lambda$.
\end{rem}

The above axioms imply that the identity section $s_\mathrm I\colon \Lambda\to\mathcal G$, $s_\mathrm I(\lambda)=1_\lambda$ for all $\lambda\in\Lambda$, is continuous. Namely, for the continuity at any fixed $\lambda_0$, consider a local trivialization $h\colon V\times U\to Z$ as above, and set $g_\lambda=h(\lambda,1_{\lambda_0})$ for all $\lambda\in V$. Then, $s_\mathrm I(\lambda)=g_\lambda^{-1}\cdot h(\lambda,1_{\lambda_0})$, which is continuous by the above axioms.

It also follows easily that the set $\mathrm{Lie}(\mathcal G):=\bigcup_{\lambda\in\Lambda}(\{\lambda\}\times\mathfrak g_\lambda)$
is a topological vector bundle over $\Lambda$. Namely, given a local trivialization $h\colon V\times U\to Z$ as
in \eqref{eq:fibersontofibres}, then the map $\widetilde h\colon V\times U\to\widetilde Z$ defined by $\widetilde h(\lambda,g)=h(\lambda,1_{\lambda_0})^{-1}h(\lambda,g)$
is another local trivialization that satisfies $\widetilde h(\lambda,1_{\lambda_0})=1_\lambda$ for all $\lambda\in V$.
Then, vector bundle local trivializations for $\mathrm{Lie}(\mathcal G)$ are obtained by
differentiating at $1_{\lambda_0}$ such a map $\widetilde h$.

The continuity of the action of $\mathcal G$ on $X$ is intended in the sense that the map $\beta\colon\mathcal G\times X\to X$, defined by $\beta(\lambda,g,x)=\beta_x^\lambda(g)=g\cdot x$, is continuous in all three variables. The minimal regularity requirements for the action are that
\begin{itemize}
\item there exists a dense subset $X'\subset X$ such that $\beta(\lambda,\cdot,x)$ is differentiable at $1_\lambda$ for all $\lambda\in\Lambda$ and all $x\in X'$;
\item as in Subsection~\ref{sub:nonreggroupactions}, given vector bundles $\mathcal E$ and $\mathcal Y$, with morphisms
$j\colon \mathcal E\to\mathcal Y^*$ and $\kappa\colon TX\to\mathcal Y$,
the map $k_x\circ\mathrm d\beta(\lambda,1_\lambda,x)$ must admit a continuous extension to a global section
of the bundle $\mathrm{Hom}(\mathrm{Lie}(\mathcal G),\mathcal Y)$.
\end{itemize}
Equipped with the above topological framework, the proof of the equivariant implicit function theorem in Section~\ref{sec:proofregular} carries over \emph{verbatim} to the case of continuous bundles of Lie groups.
\end{section}

\begin{section}{Applications to Geometric Variational Problems}
\label{sec:applications}

In this section, we discuss various applications of the above abstract deformation results to geometric variational problems. The necessity of the non-regular extensions is illustrated in two of the three applications discussed, namely the variational problems of finding CMC and Hamiltonian stationary Lagrangian submanifolds. In these two problems, invariance under reparametrizations causes the group action to be not differentiable at every point. The third application, concerning harmonic maps between manifolds, gives an example of the regular case, where these extensions are not needed.

\subsection{Deformation of CMC hypersurfaces}\label{subsec:cmc}
Let $M$ be a compact manifold, with $\dim M=m$, and let $\overline M$ be a differentiable manifold,
with $\dim\overline M=m+1$, endowed  with a Riemannian metric $\overline{\mathbf g}$.
For $\alpha\in\left]0,1\right[$, denote by $\mathrm{Emb}^{2,\alpha}(M,\overline M)$
the set of all embeddings of class $C^{2,\alpha}$ of $M$ into $\overline M$, which is an open subset of the Banach space
$C^{2,\alpha}(M,\overline M)$ of all $C^{2,\alpha}$-functions from $M$ to $\overline M$. Two embeddings $x_i\colon M\to\overline M$, $i=1,2,$ are said to be \emph{isometrically congruent} if there exists a diffeomorphism $\phi$ of $M$ and an isometry $F$ of $\overline M$ such that $x_2=F\circ x_1\circ\phi$.
Recall that the mean curvature $\mathcal H_x$ of an embedding $x\colon M\to\overline M$ is the norm of the \emph{mean curvature vector} $\vec H_x$ of $x$, which is the trace of the second fundamental form $\mathcal S_x$. The embedding is said to have \emph{constant mean curvature $H$} (in short, CMC), if $\mathcal H_x=H$.

The CMC embedding problem has a convenient variational framework that we now describe.
Suppose $x\colon M\to\overline M$ is a CMC embedding such that $x(M)$ is the boundary of a bounded domain of $\overline M$. Let $X$ be the (component of $x$ of the) set of \emph{$C^{2,\alpha}$-unparametrized embeddings} of $M$ into $\overline M$, i.e., the quotient of (component containing $x$ of) the space of $C^{2,\alpha}$-embeddings of $M$ into $\overline M$ by the action of the diffeomorphism group of $M$. In other words, $X$ is the
set of $C^{2,\alpha}$-submanifolds of $\overline M$ that are diffeomorphic to $M$.
Such a space admits a global topological manifold
structure, modeled on the Banach space $C^{2,\alpha}(M)$ of real-valued functions on $M$ of class $C^{2,\alpha}$. Given a smooth embedding $x\colon M\to\overline M$, denote by $[x]\in X$ the corresponding class. Unparametrized embeddings in a neighborhood of $[x]\in X$ are parametrized by $C^{2,\alpha}$-sections $V$ of the normal bundle of $x$ near the null section. Given one such $V$, the associated unparametrized embedding is the class of the embedding $M\ni p\mapsto\exp_{x(p)}V_p\in\overline M$, where $\exp$ is the exponential map of $(\overline M,\overline{\mathbf g})$. Since $x(M)$ is the boundary of a domain in $\overline M$, the normal bundle of $x(M)$ is orientable. Thus, its sections are of the form $V=h\,\vec{n}_x$, where $\vec{n}_x$ is the unit normal field of $x$, and such section $V$ is identified with the function $h\colon M\to\R$. In particular, for all $x\in X$, the tangent space $T_{[x]}X$ is identified with $C^{2,\alpha}(M)$.

Let $\overline{\mathbf g}_\lambda$ be a smooth family of metrics on $\overline M$, parametrized by $\lambda\in\Lambda$, where $\Lambda$ is a smooth manifold. Given an unparametrized embedding $[x]$, where the image of $x$ is the boundary of a bounded domain $\Omega_x\subset\overline M$, denote by $\area_\lambda(x)$ the volume of $(M,\mathbf g_\lambda)$, where $\mathbf g_\lambda$ is the pull-back metric $x^*(\overline{\mathbf g}_\lambda)$. Denote by $\Vol_\lambda(x)$ the volume of $\Omega_x$ relatively to the volume form of $\overline{\mathbf g}_\lambda$. For a given $H\in\R$, set
\begin{equation}\label{eq:functionalcmc}
f\colon X\times\Lambda\to\R, \quad f([x],\lambda)=\area_\lambda(x)+H\Vol_\lambda(x).
\end{equation}
Regularity properties of this functional in the space of unparametrized embeddings are discussed in~\cite{AliPic10}. If $x$ is a smooth embedding, up to using a local chart around $[x]\in X$ as an identification, we can consider the first variation of the above functional with respect to a variation $V=h\,\vec{n}_x$, which is well-known to be
\begin{equation}\label{eq:firstvarcmc}
\partial_1 f([x],\lambda)V=\int_M(H-\mathcal H_x)h \vol_{\mathbf g_\lambda},
\end{equation}
where $\mathcal H_x$ is the mean curvature function of the embedding $x\colon M\to\overline M$. As a result, critical points of $f_\lambda\colon X\to\R$ are precisely the classes of smooth embeddings of $M$ into $\overline M$
that have constant mean curvature equal to $H$, when $\overline M$ is endowed with the metric $\overline{\mathbf g}_\lambda$. The corresponding \emph{Jacobi operator} $J_x$ of a CMC embedding $x$ is the linear elliptic differential operator
\begin{equation}\label{eq:JacobiCMC}
J_x(h)=\Delta_x h-\left(\Ric(\vec{n}_x)+\|\mathcal S_x\|^2\right)h,
\end{equation}
where $\Delta_x$ is the (positive) Laplacian of functions on $M$ relative to the metric $x^*(\overline{\mathbf g}_\lambda)$, $\Ric(\vec{n}_x)$ is the Ricci curvature of $\overline M$ evaluated on the unit normal $\vec{n}_x$ and $\mathcal S_x$ is the second fundamental form of $x$. A \emph{Jacobi field} along $x$ is a smooth function $h\colon M\to\mathds R$ satisfying $J_x(h)=0$.

Let $G_\lambda=\Iso(\overline M,\overline{\mathbf g}_\lambda)$ be the isometry group of $(\overline M,\overline{\mathbf g}_\lambda)$, which is a finite-dimensional Lie group. For each $\lambda\in\Lambda$, the Lie group $G_\lambda$ acts on $X$ by left-composition $g\cdot [x]=[g\circ x]$. This action is \emph{continuous}, but \emph{not differentiable} on all of $X$. Both $\area_\lambda$ and $\Vol_\lambda$ on \eqref{eq:functionalcmc} are $G_\lambda$-invariant functions on $X$, and hence so is $f_\lambda$. This invariance implies that symmetries of the ambient, encoded in the form of Killing fields, induce certain Jacobi fields on every CMC embedding.

\begin{defin}
Given a Killing vector field $K$ on $(\overline M,\mathbf g_\lambda)$, the function $h_K=\overline{\mathbf g}_\lambda(K,\vec n_x)$ is a Jacobi field. Jacobi fields of this form are called \emph{Killing-Jacobi fields}, and $h_K$ is said to be \emph{induced by $K$}. The CMC embedding $x$ is said to be \emph{equivariantly nondegenerate} if all the Jacobi fields along $x$ are Killing-Jacobi fields.
\end{defin}

In other words, $x$ is equivariantly nondegenerate if all infinitesimal CMC deformations of $x$ with the same mean curvature arise from isometries of the ambient space. Equivariant nondegeneracy of CMC embedding is the key property to obtain the following deformation results for CMC embeddings:

\begin{teo}\label{thm:applCMC}
Let $\overline{\mathbf g}_\lambda$ be a family of Riemannian metrics on $\overline M$, parametrized by $\lambda\in\Lambda$, where $\Lambda$ is a smooth manifold. Let $x_0$ be an equivariantly nondegenerate embedding of $M$ into $(\overline M,\overline{\mathbf g}_{\lambda_0})$ with constant mean curvature $H$, such that $x_0(M)$ is the boundary of a bounded domain of $\overline M$.
Assume that $G_\lambda=\mathrm{Iso}(\overline M,\mathbf g_\lambda)$, $\lambda\in\Lambda$, forms a continuous bundle
of Lie groups $\mathcal G=\{G_\lambda:\lambda\in\Lambda\}$.
Then, there exists a neighborhood $V$ of $\lambda_0$ in $\Lambda$, and a smooth function $x\colon V\to\mathrm{Emb}^{2,\alpha}(M,\overline M)$
satisfying:
\begin{itemize}
\item[(a)] $x(\lambda)$ is an embedding of $M$ into $(\overline M,\overline{\mathbf g}_\lambda)$ with constant mean curvature $H$ for all $\lambda\in V$;
\item[(b)] $x(\lambda_0)=x_0$.
\end{itemize}
Moreover, given $\lambda\in V$, any other embedding of constant mean curvature $H$ of $M$ into $(\overline M,\overline{\mathbf g}_\lambda)$ sufficiently close to $x(\lambda)$ in $\mathrm{Emb}^{2,\alpha}(M,\overline M)$ is isometrically congruent to $x(\lambda)$.
\end{teo}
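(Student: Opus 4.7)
The plan is to apply the non-regular version of the abstract equivariant implicit function theorem (as developed in Section~\ref{sec:nonregular}) to the functional $f_\lambda\colon X\to\R$ defined by \eqref{eq:functionalcmc}, where $X$ is the topological Banach manifold of $C^{2,\alpha}$-unparametrized embeddings of $M$ in $\overline M$ in the connected component of $[x_0]$. By \eqref{eq:firstvarcmc} the critical points of $f_\lambda$ are exactly the classes of CMC-$H$ embeddings into $(\overline M,\overline{\mathbf g}_\lambda)$, and $f_\lambda$ is $G_\lambda$-invariant under the left-composition action $g\cdot[x]=[g\circ x]$. The bundle-of-Lie-groups assumption on $\{G_\lambda\}$ provides the varying symmetry structure required by the theorem.

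For the auxiliary Fredholm structure, I take $\mathcal E$ to be the bundle over $X$ whose fiber at $[x]$ is $C^{0,\alpha}(M)$, endowed at each point with the (incomplete) $L^2$-pairing $\langle h_1,h_2\rangle_{[x]}=\int_M h_1 h_2\,\vol_{\mathbf g_\lambda}$. Then \eqref{eq:firstvarcmc} exhibits $\delta f_\lambda([x])=H-\mathcal H_x\in\mathcal E_{[x]}$ as a gradient-like map for $f_\lambda$, and its vertical derivative at a critical point $[x]$ is precisely the Jacobi operator $J_x$ from \eqref{eq:JacobiCMC}. Since $J_x$ is a formally self-adjoint, strongly elliptic second-order differential operator on the compact manifold $M$, it is a Fredholm operator of index zero from $C^{2,\alpha}(M)\cong T_{[x]}X$ to $C^{0,\alpha}(M)=\mathcal E_{[x]}$, symmetric with respect to $\langle\cdot,\cdot\rangle_{[x]}$. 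This yields the self-adjoint Fredholm representation of $\mathrm{Hess}^{f_\lambda}([x])$ required by Definition~\ref{def:auxFredholm}.

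The main technical obstacle is that the action of $G_\lambda$ on $X$ is only continuous (and the charts on $X$ itself are only continuously compatible), with differentiability of $\beta^\lambda_{[x]}(g)=g\cdot[x]$ at $1_\lambda$ holding only when $x$ is sufficiently smooth; this is exactly the setting of Subsection~\ref{sub:nonreggroupactions} and Subsection~\ref{subsec:actionhomeos}. I take $X'\subset X$ to be the dense subset of $C^\infty$-embeddings, which contains $x_0$ up to replacement by an isometrically congruent smooth representative (by elliptic regularity for CMC embeddings in the metric $\overline{\mathbf g}_{\lambda_0}$). For the target bundle $\mathcal Y$, I take the bundle with fiber $C^{0,\alpha}(M)$ and let $\kappa\colon TX\to\mathcal Y$ be the canonical continuous inclusion $C^{2,\alpha}(M)\hookrightarrow C^{0,\alpha}(M)$; on $X'$ the map $(\lambda,[x])\mapsto \kappa_{[x]}\circ\dd\beta^\lambda_{[x]}(1_\lambda)$, which sends a Killing field $K\in\mathfrak g_\lambda$ to its normal component $\overline{\mathbf g}_\lambda(K,\vec n_x)\in C^{0,\alpha}(M)$, extends continuously to all of $\Lambda\times X$ because the right-hand side makes sense for any $C^{2,\alpha}$-embedding. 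Invariance of the critical set under the homeomorphism action (required by Subsection~\ref{subsec:actionhomeos}) holds because $\delta f_\lambda$ is $G_\lambda$-equivariant with respect to the natural lift of the action to $\mathcal E$.

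Equivariant nondegeneracy of $x_0$ is verbatim the hypothesis that $\ker J_{x_0}$ consists entirely of Killing-Jacobi fields, which is the condition $\ker\mathrm{Hess}^{f_{\lambda_0}}([x_0])=\mathcal D_{[x_0]}^{\lambda_0}$. Having verified all hypotheses, the equivariant implicit function theorem in the form of Subsections~\ref{sub:nonreggroupactions}--\ref{subsec:ctsbundleliegps} produces a neighborhood $U$ of $\lambda_0$, a neighborhood $V$ of $[x_0]$ in $X$, and a continuous selection $\lambda\mapsto[x(\lambda)]\in V$ of CMC-$H$ unparametrized embeddings with $[x(\lambda_0)]=[x_0]$, such that every critical point of $f_\lambda$ in $V$ lies in the $G_\lambda$-orbit of $[x(\lambda)]$. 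Smoothness of $\lambda\mapsto x(\lambda)$ as a map into $\mathrm{Emb}^{2,\alpha}(M,\overline M)$ follows from the smoothness conclusion of the abstract theorem applied on the transversal slice $S$, and the uniqueness statement is exactly the translation of the orbit-uniqueness conclusion into the language of isometric congruence via a diffeomorphism of $M$ composed with an isometry of $(\overline M,\overline{\mathbf g}_\lambda)$.
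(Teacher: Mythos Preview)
Your proof follows essentially the same route as the paper's, verifying one by one the ingredients of the non-regular equivariant implicit function theorem for the CMC functional. Two small technical discrepancies are worth noting. First, your $L^2$-pairing $\langle h_1,h_2\rangle_{[x]}=\int_M h_1h_2\,\vol_{\mathbf g_\lambda}$ depends on $\lambda$, whereas Definition~\ref{def:auxFredholm} fixes an inner product depending only on the point of $X$; the paper handles this by using the volume form of a \emph{fixed} background metric $\mathbf g_*$ and absorbing the density correction into the gradient, $\delta f_\lambda([x])=(H-\mathcal H_x)\,\xi_{\mathbf g_\lambda}$ with $\xi_{\mathbf g_\lambda}\vol_{\mathbf g_*}=\vol_{\mathbf g_\lambda}$. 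Second, the paper chooses $\mathcal Y$ with fiber $C^{1,\alpha}$ rather than your $C^{0,\alpha}$: since for a $C^{2,\alpha}$-embedding $x$ the unit normal $\vec n_x$ is only $C^{1,\alpha}$, the map $K\mapsto\overline{\mathbf g}_\lambda(K,\vec n_x)$ naturally takes values in $C^{1,\alpha}$, and $C^{1,\alpha}$ is the sharp target for the continuous extension of $\kappa\circ\dd\beta^\lambda_{[x]}(1_\lambda)$; your coarser choice $C^{0,\alpha}$ still satisfies the axioms of Subsection~\ref{sub:nonreggroupactions}, so this is a matter of taste rather than a gap.
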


\begin{proof}
This  is an application of our equivariant implicit function theorem to the above variational setup that requires all the non-regularity extensions discussed in
Section~\ref{sec:nonregular}. As indicated above, $X$ is the space of unparametrized embeddings of $M$ into $\overline M$ of class $C^{2,\alpha}$, and $f$ is given by \eqref{eq:functionalcmc}. For all $\lambda\in\Lambda$, the Lie group $G_\lambda$ acts on $X$ by left-composition, and the maps $\beta_{[x]}^\lambda\colon G_\lambda\to X$ are smooth when $[x]$ is the class of a smooth embedding. The set $X'$ of such embeddings is dense in $X$. In particular, the image of $\mathrm d\beta_{[x]}^\lambda(1_\lambda)$ coincides with the space of Killing-Jacobi fields along $x$, so that the equivariant nondegeneracy assumption on $x$ is precisely the equivariant nondegeneracy assumption of our implicit function theorem.

All the other objects involved in the statement and the proof of the abstract implicit function theorem, described in Section~\ref{sec:proofregular} and Section~\ref{sec:nonregular}, are as follows:
\begin{itemize}
\item $\mathcal E$ is the vector bundle with fiber over $[x]$ the Banach space of  $C^{0,\alpha}$-sections of
the normal bundle of $x$. Recall that $T_{[x]}X$ is the Banach space of $C^{2,\alpha}$-sections of
the normal bundle of $x$, and the inclusion $TX\subset\mathcal E$ is obvious.
\item The inner product $\langle\cdot,\cdot\rangle_{[x]}$ on $\mathcal E_{[x]}$ is the $L^2$-paring $\langle h_1,h_2\rangle_{[x]}=\int_M h_1\, h_2\,\mathrm{vol}_{\mathbf g_*}$ with respect to the volume form of a background metric\footnote{The Riemannian metric $\mathbf g_*$ is a (fixed) reference metric, an can be chosen arbitrarily. It simply serves the purpose of inducing an $L^2$-pairing that does not vary with $\lambda$.} $\mathbf g_*$;
\item $\mathcal Y$ is the vector bundle on $X$, whose fiber at the point $[x]$ is the Banach space of $C^{1,\alpha}$-sections of the normal bundle of $x$. The bundle morphism $\kappa\colon TX\to\mathcal Y$ is the obvious inclusion, and the morphism $j\colon \mathcal E\to\mathcal Y^*$ is induced by the $L^2$-pairing above;
\item identifying the Lie algebra $\mathfrak g_\lambda$ with the space of (complete) Killing vector fields
on $(\overline M,\overline{\mathbf g}_\lambda)$, for $[y]\in X'$, the map $\dd\beta_{[y]}^\lambda(1_\lambda)\colon\mathfrak g_\lambda\to T_{[y]}X$ associates to a Killing vector field ${K}$ the $\overline{\mathbf g}_\lambda$-orthogonal component of ${K}$ along $y$;
\item $\delta f_\lambda([x])=(H-\mathcal H_x)\,\xi_{\mathbf g_\lambda}$, where $\mathcal H_x$ is the mean curvature function of $x$ (which is a $C^{0,\alpha}$-function on $M$) and $\xi_{\mathbf g_\lambda}$ is the positive function satisfying $\xi_{\mathbf g_\lambda}\vol_{\mathbf g_*}=\vol_{\mathbf g_\lambda}$, see \eqref{eq:firstvarcmc};
\item the vertical derivative of $\delta f_\lambda$ at $[x]$ is identified with the Jacobi operator $J_x$, as indicated in \eqref{eq:ddfjacobi}. As a linear elliptic operator of second-order, this is a Fredholm operator of index $0$ from $C^{2,\alpha}(M)$ to $C^{0,\alpha}(M)$.\qedhere
\end{itemize}
\end{proof}

\subsection{More general results on CMC deformations}
In view to concrete applications, it is worth discussing a few generalizations of Theorem~\ref{thm:applCMC} that allow to obtain deformation results for CMC \emph{immersions}, and not only \emph{embeddings}.

\subsubsection{Generalized volume functionals}
Let us start with some remarks on the assumption that $x(M)$ should be the boundary of a domain in $\overline M$. First, it must be observed that Theorem~\ref{thm:applCMC} does not hold in full generality if such assumption is dropped, see \cite[Ex ~4.11]{BetPicSic1}. In particular, with the above statement, the result of Theorem~\ref{thm:applCMC} does not apply to CMC embeddings of manifolds with boundary.
There exist, however, interesting generalizations of the result obtained by weakening this assumption. The crucial observation is that such assumption is needed for the variational characterization of CMC embeddings, which uses the volume functional $\Vol_\lambda$. For the validity of the result, one may assume more generally the existence of an \emph{invariant volume functional} in a neighborhood of $x$ in the space of embeddings of $M$ into $\overline M$. Details of the definition of invariant volume functionals are discussed in \cite[App~B]{BetPicSic1}. Let us simply list some conditions under which invariant volume functionals exist:
\begin{itemize}
\item $\overline M$ noncompact and $\mathrm{Iso}(\overline M,\overline{\mathbf g}_0)$ compact;
\item $\partial M\neq\emptyset$ and $\overline M$ noncompact;
\item $\overline M$ noncompact with $H^m(\overline M,\R)=0$, or, more generally, if the image of $x$ is contained in an open subset of $\overline M$ whose $m^{th}$ de Rham cohomology vanishes.
\end{itemize}
The last item includes, for instance, the cases $\overline M=\mathds R^{m+1}$ and $S^{m+1}$. Moreover, manifolds of the form $\overline M^{m+1}=\mathds R^k\times N^{m+1-k}$, $k\ge1$, have trivial $m^{th}$ de Rham cohomology; and manifolds of the form $\overline M^{m+1}=S^k\times N^{m+1-k}$, $k\ge1$, have open dense subsets with trivial $m^{th}$ de Rham cohomology.

\subsubsection{Free immersions}
As we have pointed out, the space of unparameterized embeddings of a manifold $M$ into another manifold $\overline M$
is described as the space of orbits of the action of the diffeomorphism group of $M$ acting by right-composition in the
space of embeddings of $M$ into $\overline M$. Such action is free, and this fact plays an important role in establishing
the (local) smooth structure for the space of unparameterized embeddings. A general immersion $x\colon M\to\overline M$ can
have non-trivial stabilizer in the group of diffeomorphisms; for instance, an iterated closed
curve in $\overline M$ is an immersion of $S^1$ whose stabilizer is a finite cyclic group.

The variational theory for parametrization invariant geometric functionals can be extended to the case of immersions
whose stabilizer is trivial, i.e., the so-called \emph{free immersions} of $M$ into $\overline M$. More precisely, an immersion $x\colon M\to\overline M$ is \emph{free} if the only diffeomorphism $\varphi\colon M\to M$ such that $x\circ\varphi=x$ is the identity. The differentiable structure of free immersions is studied in detailed, for the smooth case, in \cite{CerMasMic91}. The appropriate extensions to the $C^{2,\alpha}$-case, needed for dealing with the CMC variational problem, can be carried out as in \cite{AliPic10}. We point out that being free is a rather weak assumption; for instance, if there exists at least one point $p$ such that the inverse image of $x(p)$ consists of a single point, then the immersion $x$ is free (see \cite[Lemma~1.4]{{CerMasMic91}}).
Theorem~\ref{thm:applCMC} holds, \emph{mutatis mutandis}, under the more general assumptions that
$x_0\colon M\to\overline M$ is a free immersion. A general statement including this case will be given below
(Theorem~\ref{thm:applCMC_ext}).

\subsubsection{A generalized equivariant nondegeneracy assumption}
As a motivation for the introduction of a more general nondegeneracy assumption in Theorem~\ref{thm:applCMC},
let us observe that great attention has been given recently to the study of submanifold theory (surfaces) in ambient spaces like $M_k=M^2(k)\times\mathds R$, where $M^2(k)$ is a $2$-dimensional space form of curvature $k$, see \cite{bd,hw1,hw2,mr,mora}. In this case, when $k\ne0$, the isometry group of $M_k$ has dimension $4$, while for $k=0$, since $M_0$ is the standard Euclidean $3$-space, the isometry group is $6$-dimensional. Thus, if one wants to prove deformation results for surfaces in $\mathds R^3$ to surfaces in $M_k$, $k\ne0$, Theorem~\ref{thm:applCMC} cannot be applied.

Nonetheless, we observe that the isometry group of $\mathds R^3$ contains a closed subgroup $G_0$
such that, denoting $G_k=\Iso(M_k)$, $k\ne0$, the set
$\big\{(k,G_k):k\in\mathds R\big\}$ is a smooth bundle of Lie groups. Namely, it is enough to define $G_0$ as the group
of isometries of $\mathds R^3$ that preserve the splitting $\mathds R^2\times\mathds R$. Clearly,
such group is isomorphic to the product $\Iso(\mathds R^2)\times \mathds R$, and has dimension $4$. This motivates the following:

\begin{defin}
Let $(\overline M,\mathbf g)$ be a Riemannian manifold, $\Iso(\overline M,\mathbf g)$ the group of isometries of $(\overline M,\mathbf g)$,
$\mathfrak{Iso}(\overline M,\mathbf g)$ its Lie algebra, and let $\mathfrak K$ be a Lie subalgebra of $\mathfrak{Iso}(\overline M,\mathbf g)$.
A codimension $1$ CMC embedding $x\colon M\to\overline M$ is said to be \emph{$\mathfrak K$-nondegenerate} if every Jacobi field along $x$ is a Killing-Jacobi field induced by some element of $\mathfrak K$.
\end{defin}

In conclusion, we can formulate the following extension of Theorem~\ref{thm:applCMC}:

\begin{teo}\label{thm:applCMC_ext}
Let $M^m$ be a compact manifold (possibly with boundary) and let $\overline M^{m+1}$ be any differentiable manifold.
Let $\overline{\mathbf g}_\lambda$ be a family of Riemannian metrics on $\overline M$, parameterized by $\lambda\in\Lambda$,
where $\Lambda$ is a smooth manifold. Let $\lambda_0\in\Lambda$ be fixed; for $\lambda\ne\lambda_0$, set $G_\lambda=\mathrm{Iso}(\overline M,\mathbf g_\lambda)$,
and let $G_{\lambda_0}$ be a closed subgroup of $\Iso(\overline M,\mathbf g_{\lambda_0})$, with Lie algebra $\mathfrak g_{\lambda_0}$. Let $x_0$ be a free immersion of $M$ into $(\overline M,\overline{\mathbf g}_{\lambda_0})$ having constant mean curvature $H$.
 Assume the following:
 \begin{itemize}
 \item there exists an invariant volume functional in a neighborhood of $x_0(M)$;
 \item the family $G_\lambda$, $\lambda\in\Lambda$, forms a continuous bundle of Lie groups;
 \item $x_0$ is $\mathfrak g_{\lambda_0}$-nondegenerate.
 \end{itemize}
Then, there exists a neighborhood $V$ of $\lambda_0$ in $\Lambda$, and a smooth function $x\colon V\to C^{2,\alpha}(M,\overline M)$
satisfying:
\begin{itemize}
\item[(a)] $x(\lambda)$ is an immersion of $M$ into $(\overline M,\overline{\mathbf g}_\lambda)$ with constant mean curvature $H$, and
with $x(\lambda)(\partial M)=x_0(\partial M)$ for all $\lambda\in V$;
\item[(b)] $x(\lambda_0)=x_0$.
\end{itemize}
Moreover, given $\lambda\in V$, any other fixed boundary immersion of constant mean curvature $H$ of $M$ into $(\overline M,\overline{\mathbf g}_\lambda)$, sufficiently close to $x(\lambda)$ in $C^{2,\alpha}(M,\overline M)$ is isometrically congruent to $x(\lambda)$.
\end{teo}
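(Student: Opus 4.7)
The plan is to deduce Theorem~\ref{thm:applCMC_ext} from the non-regular equivariant implicit function theorem of Sections~\ref{sec:proofregular} and \ref{sec:nonregular}, following the template of the proof of Theorem~\ref{thm:applCMC}, with adjustments for the three generalizations: the use of an invariant volume functional in place of $\Vol_\lambda$, free immersions (possibly with boundary) in place of embedded boundaryless hypersurfaces, and the restriction to the closed subgroup $G_{\lambda_0}\subset\Iso(\overline M,\overline{\mathbf g}_{\lambda_0})$ at the distinguished fiber.

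First, I would set up the functional framework. Let $X$ be the connected component of $[x_0]$ in the space of $C^{2,\alpha}$-unparameterized free immersions $[x]\colon M\to\overline M$ satisfying $x(\partial M)=x_0(\partial M)$; by \cite{CerMasMic91} in the smooth case, whose $C^{2,\alpha}$-adaptation is carried out as in \cite{AliPic10}, this is a topological manifold whose local charts near $[x]$ are given by sections of the normal bundle of $x(M)$ vanishing on $\partial M$. Denoting by $\mathcal V_\lambda$ the invariant volume functional provided by hypothesis in a neighborhood of $[x_0]$, set
\begin{equation*}
f\colon X\times\Lambda\to\R, \quad f([x],\lambda)=\area_\lambda(x)+H\,\mathcal V_\lambda([x]).
\end{equation*}
By the defining property of invariant volume functionals discussed in \cite[App~B]{BetPicSic1}, the first variation formula \eqref{eq:firstvarcmc} carries over to this setting, so the critical points of $f_\lambda$ in $X$ are exactly the fixed-boundary free immersions of constant mean curvature $H$ with respect to $\overline{\mathbf g}_\lambda$.

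The remaining data required by the abstract theorem would be chosen as in the proof of Theorem~\ref{thm:applCMC}, adapted to the boundary setting: the Banach bundles $\mathcal E$ and $\mathcal Y$ over $X$ are the $C^{0,\alpha}$- and $C^{1,\alpha}$-sections of the normal bundles vanishing on $\partial M$, the inner product on each fiber of $\mathcal E$ is the $L^2$-pairing induced by a background metric $\mathbf g_*$, the dense subset $X'\subset X$ consists of smooth free immersions, the gradient-like map is $\delta f_\lambda([x])=(H-\mathcal H_x)\xi_{\mathbf g_\lambda}$, and the vertical derivative of $\delta f_{\lambda_0}$ at $x_0$ is the Jacobi operator $J_{x_0}$ of \eqref{eq:JacobiCMC} with Dirichlet boundary conditions, which is a self-adjoint Fredholm operator of index $0$ between the appropriate H\"older spaces. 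The $\mathcal G$-action on $X$ is by left-composition; for $[y]\in X'$, the image of $\dd\beta_{[y]}^\lambda(1_\lambda)$ is the space of normal components along $y$ of Killing fields in $\mathfrak g_\lambda$. Evaluated at $(\lambda_0,x_0)$, this image is precisely the space of Killing-Jacobi fields induced by elements of $\mathfrak g_{\lambda_0}$, which by the $\mathfrak g_{\lambda_0}$-nondegeneracy hypothesis equals $\ker J_{x_0}$; this is exactly the equivariant nondegeneracy assumption of the abstract theorem, now with respect to the possibly smaller bundle $\mathcal G=\{G_\lambda:\lambda\in\Lambda\}$.

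Once these identifications are in place, the non-regular equivariant implicit function theorem produces a neighborhood $V$ of $\lambda_0$ in $\Lambda$, a neighborhood of $[x_0]$ in $X$, and a continuous section $\lambda\mapsto[x(\lambda)]$, together with the local uniqueness clause that translates into the desired isometric congruence statement after passing to smooth representatives in $C^{2,\alpha}(M,\overline M)$. The main obstacle I foresee is the verification of the continuous-extension axiom of Subsection~\ref{subsec:ctsbundleliegps}, namely that $(\lambda,[x])\mapsto\kappa_{[x]}\circ\dd\beta_{[x]}^\lambda(1_\lambda)$, initially defined only on $\Lambda\times X'$, extends continuously to $\Lambda\times X$ as a section of $\mathrm{Hom}(\mathrm{Lie}(\mathcal G),\mathcal Y)$. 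This requires trivializing $\mathrm{Lie}(\mathcal G)$ locally (via Lemma~\ref{thm:existencesections} when the bundle is smooth, and by the analogous construction built into Definition~\ref{def:continuousliegps} in the merely continuous case), writing the normal projection of a local family of Killing vector fields in H\"older norms along a variable immersion, and exploiting that the normal projection of a Killing field along a $C^{2,\alpha}$ immersion only requires $C^1$-regularity of the immersion; the remaining compatibility checks, including the invariance of the critical set of $f_\lambda$ under $G_\lambda$ (Subsection~\ref{subsec:actionhomeos}), follow from the equivariance of $\delta f_\lambda$ with respect to the canonical lift of the $G_\lambda$-action to $\mathcal E$.
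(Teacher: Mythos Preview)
Your proposal is correct and matches the paper's intended approach. The paper does not give a separate formal proof of Theorem~\ref{thm:applCMC_ext}; it presents the result as an immediate extension of Theorem~\ref{thm:applCMC} via the three modifications discussed in the preceding subsections (invariant volume functionals, free immersions, and $\mathfrak g_{\lambda_0}$-nondegeneracy), and your write-up supplies precisely that template with the corresponding choices of $X$, $\mathcal E$, $\mathcal Y$, $X'$, $\delta f_\lambda$, and $J_{x_0}$.
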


Examples where Theorem~\ref{thm:applCMC_ext} is applied can be obtained by looking at
CMC immersions of manifolds with boundary conditions.
\begin{example}\label{ex:fixedboundaryCMC}
Let $M$ be a compact manifold with boundary, with, say $\partial M=N_1\cup N_2$, where $N_1$ and $N_2$ the two connected
components of $\partial M$. Let $x\colon M\to\mathds R^3$ be a CMC immersion such that $x(N_1)$ and $x(N_2)$ belong to two parallel horizontal planes, say $x_3=a_i$, $i=1,2$.
Here, we think of $\mathds R^3$ with coordinates $(x_1,x_2,x_3)$, so $x_3=\text{const.}$ are the horizontal planes. We say that $x$ is equivariantly nondegenerate if the only
Jacobi fields $J$ along $x$ that are horizontal on $\partial M$ are Killing-Jacobi fields. For $k\in\mathds R$, let $M^2(k)$ be the
$2$-dimensional space form of curvature $k$. Note that equivariant nondegeneracy does not imply that $x$ is rotationally invariant.

If $x$ is equivariantly nondegenerate, then $x$ can be smoothly deformed
to CMC immersions of $M$ into the product $M^2(k)\times\mathds R$, with the same constant mean curvature, and with
boundary on the slices $M^2(k)\times\{a_i\}$, $i=1,2$. This follows from Theorem~\ref{thm:applCMC_ext}, observing that any Killing vector field in $\mathds R^3$ whose restriction to a nontrivial closed horizontal curve is horizontal must be the sum of an infinitesimal horizontal rotation and a horizontal translation.\footnote{More generally, any Killing vector field $K$ in $\mathds R^3$ which is horizontal at three noncollinear points lying in a horizontal plane,
must be the sum of an infinitesimal horizontal rotation and a horizontal translation. Write a general Killing field in $\mathds R^3$ as $K=D+W$, where $D\in\mathfrak{so}(3)$ is an infinitesimal rotation and $W\in\mathds R^3$ is a translation. Assume $P_1$, $P_2$ and $P_3$ three noncollinear points on the plane $z=z_0$, and set $v_1=\overrightarrow{P_1P_3}$, $v_2=\overrightarrow{P_2P_3}$;
these are linearly independent horizontal vectors. Then, $(Dv_1)\cdot e_3=(Dv_2)\cdot e_3=0$, i.e., $D$ is a horizontal infinitesimal rotation.
From this, it also follows that $W$ is horizontal.}
In other words, Theorem~\ref{thm:applCMC_ext} is applied to the Lie algebra $\mathfrak g_0$ consisting of vector
fields that generate isometries of $\mathds R^3$ that preserve the splitting $\mathds R^2\times\mathds R$. In particular, the result applies to every equivariantly nondegenerate portion of nodoid or unduloid in $\mathds R^3$, with boundary
on horizontal planes.
\end{example}

\subsection{Deformation of harmonic maps}
Let $(M,\mathbf g)$ and $(\overline M,\overline{\mathbf g})$ be Riemannian manifolds.
A $C^2$-map $\phi\colon M\to\overline M$
is said to be $(\mathbf g,\overline{\mathbf g})$-\emph{harmonic} if
\begin{equation}\label{eq:deflaplacian}
\Delta_{\mathbf g,\overline{\mathbf g}}(\phi):=
\mathrm{tr}(\widehat{\nabla}\mathrm d\phi)=0
\end{equation}
where $\widehat{\nabla}$ is the
connection on the vector bundle $TM^*\otimes\phi^*(T\overline{M})$ over $M$
induced by the Levi-Civita connections $\nabla$ of $\mathbf g$ and $\overline{\nabla}$ of $\overline{\mathbf g}$.
When the source manifold $M$ is compact, harmonic maps on $M$ have the following variational characterization.

Let $X$ be the Banach manifold $C^{2,\alpha}(M,\overline M)$ of maps $\phi\colon M\to\overline M$ of class $C^{2,\alpha}$. Let $\Lambda$ be a smooth manifold that parametrizes a family of pairs $({\mathbf g}_\lambda,{\overline{\mathbf g}}_\lambda)$, where ${\mathbf g}_\lambda$ is a metric on $M$ and $\overline{\mathbf g}_\lambda$ is a metric on $\overline M$, both of class $C^k$ for a fixed $k\ge3$. Set
\begin{equation}\label{eq:intaction}
f\colon X\times\Lambda\to\R, \quad f(\phi,\lambda)=\tfrac12\int_M\Vert\mathrm d\phi(x)\Vert_{HS}^2\,\mathrm{vol}_{{\mathbf g}_\lambda},
\end{equation}
where $\mathrm{vol}_{\mathbf g_\lambda}$ is the volume form (or density) of $\mathbf g_\lambda$ and $\Vert\mathrm d\phi(x)\Vert_{HS}$ is the Hilbert-Schmidt norm of the linear map $\mathrm d\phi(x)$.
Note that the dependence on $\overline{\mathbf g}_\lambda$ in the above formula is hidden in $\Vert\mathrm d\phi(x)\Vert_{HS}$, which depends on the metrics of both source and target manifolds.

For a given $\lambda\in\Lambda$, critical points of $f_\lambda(\phi)= f(\phi,\lambda)$ are precisely
the $(\mathbf g_\lambda,\overline{\mathbf g}_\lambda)$-harmonic maps $\phi\colon M\to\overline M$.
For $\phi\in X$, the tangent space $T_\phi X$ is identified
with the Banach space $C^{2,\alpha}(\phi^*T\overline{M})$ of all vector fields along $\phi$ of class $C^{2,\alpha}$. Given a such $V\in T_\phi X$,
the first variation of $f$ in this direction is given by:
\allowdisplaybreaks
\begin{equation}\label{eq:primavarharm}
\partial_1f(\phi,\lambda)V=
\int_M \mathrm{tr}\left(\mathrm d\phi^*\overline{\nabla}V\right) \,\mathrm{vol}_{\mathbf g_\lambda} =
-\int_M\overline{\mathbf g}_\lambda\big(\Delta_{\mathbf g_\lambda,\overline{\mathbf g}_\lambda}(\phi),V\big)\,\mathrm{vol}_{\mathbf g_\lambda},
\end{equation}
where the trace is meant on the entries $\mathrm d\phi^*(\cdot)\overline{\nabla}_{(\cdot)}V$.
The correspondent
{\em Jacobi operator} $J$ along a $(\mathbf g_\lambda,\overline{\mathbf g}_\lambda)$-harmonic map $\phi$ is the linear differential operator:
\begin{equation}\label{eq:harmjacobiop}
J_\phi(V)=-\Delta V+\mathrm{tr}\left(\overline{R}(\mathrm d\phi(\cdot),V)\mathrm d\phi(\cdot)\right),
\end{equation}
defined in $C^{2,\alpha}(\phi^*T\overline{M})$. Here $\overline R$ is the
curvature tensor of $\overline{\mathbf g}_\lambda$, and
$\Delta V$ is a vector field along $\phi$ uniquely defined by
\begin{equation}\label{eq:lapv}
\overline{\mathbf g}_\lambda(\Delta V,W)=\mathrm{div}(\overline{\nabla}V^*)W -\overline{\mathbf g}_\lambda(\overline{\nabla}V,\overline{\nabla}W),
\quad W\in C^{2,\alpha}(\phi^*T\overline{M})
\end{equation}
i.e., $\Delta V(x)=\sum_i\left(\overline{\nabla}_{e_i}\overline{\nabla}V\right)e_i$,
where $(e_i)_i$ is an orthonormal basis of $T_xM$. In this situation, a vector field $V$ along $\phi$ that satisfies $J_\phi(V)=0$ is called a {\em Jacobi field}.
There are two special types of Jacobi fields along a $(\mathbf g_\lambda,\overline{\mathbf g}_\lambda)$-harmonic map $\phi$.

Let $G_\lambda$ be the product of isometry groups $\mathrm{Iso}(M,{\mathbf g}_\lambda)\times\mathrm{Iso}(\overline M,\overline{\mathbf g}_\lambda)$;
there is a natural action of $G_\lambda$ on $X$ given by
\begin{equation}
G_\lambda\times X\ni\big((g,\overline g),\phi\big)\mapsto\overline g\circ\phi\circ g^{-1}\in X.
\end{equation}
Clearly, \eqref{eq:intaction} is invariant under this action. Using results from \cite{palais}, one can prove that this action is smooth, since it is given by left and right-composition with smooth maps (see \cite{PicTau} for the noncompact case), and no reparametrization issue is present. Consequently, part of the technical low regularity arguments in our equivariant implicit function theorem are not required here. Due to these symmetries, if $\overline K$ is a Killing vector field of $(\overline M,\overline{\mathbf g}_\lambda)$, then $\overline K\circ\phi$ is a Jacobi field. Likewise, if $K$ is a Killing field of $(M,\mathbf g_\lambda)$, then $\phi_*(K)$,
defined by $M\ni p\mapsto\mathrm d\phi(p)K_p\in T_{\phi(p)}\overline M$, is a Jacobi field.

\begin{defin}
The space of Jacobi fields along $\phi$ spanned by fields of the type $\overline K\circ\phi$ and
$\phi_*(K)$, for Killing fields $\overline K$ and $K$, is called the space of \emph{Killing-Jacobi fields along $\phi$}.
A $(\mathbf g_\lambda,\overline{\mathbf g}_\lambda)$-harmonic map $\phi\colon M\to\overline M$
is said to be \emph{equivariantly nondegenerate} if the space of Jacobi fields along $\phi$ coincides with the space of Killing-Jacobi fields along $\phi$.
\end{defin}

\begin{defin}
Two harmonic maps $\phi_1$ and $\phi_2$ are called {\em geometrically equivalent} if
they are in the same $\mathrm{Iso}(M,{\mathbf g}_\lambda)\times\mathrm{Iso}(\overline M,\overline{\mathbf g}_\lambda)$-orbit, i.e.,  if there
exists $g\in\mathrm{Iso}(M,{\mathbf g}_\lambda)$ and $\overline g\in\mathrm{Iso}(\overline M,\overline{\mathbf g}_\lambda)$ such that $\phi_2=\overline g\circ\phi_1\circ g^{-1}$.
\end{defin}

\begin{teo}
Let $({\mathbf g}_\lambda,\overline{\mathbf g}_\lambda)$ be a family of pairs of Riemannian metrics on $M$ and $\overline M$ respectively, parametrized by $\lambda\in\Lambda$, for some manifold $\Lambda$. Assume that $G_\lambda=\mathrm{Iso}(M,{\mathbf g}_\lambda)\times\mathrm{Iso}(\overline M,\overline{\mathbf g}_\lambda)$, $\lambda\in\Lambda$, forms a smooth bundle of Lie groups $\mathcal G=\{G_\lambda:\lambda\in\Lambda\}$.
Let $\phi_0\colon M\to\overline M$ be an equivariantly nondegenerate $({\mathbf g}_{\lambda_0},\overline{\mathbf g}_{\lambda_0})$-harmonic map.
 Then, there exists a neighborhood $V$ of $\lambda_0$ in $\Lambda$, and a smooth
function $\phi\colon V\to X$ satisfying:
\begin{itemize}
\item[(a)] $\phi(\lambda)$ is a $({\mathbf g}_\lambda,\overline{\mathbf g}_\lambda)$-harmonic map for all $\lambda\in V$;
\item[(b)] $\phi(\lambda_0)=\phi_0$.
\end{itemize}
Moreover, given $\lambda\in V$, any other $({\mathbf g}_\lambda,\overline{\mathbf g}_\lambda)$-harmonic map from $M$ to $\overline M$
sufficiently close to $\phi_0$ is geometrically equivalent to $\phi(\lambda)$.
\end{teo}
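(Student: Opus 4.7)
The plan is to apply the regular case of the equivariant implicit function theorem from Section~\ref{sec:proofregular} to the variational setup sketched after \eqref{eq:intaction}. The Banach manifold is $X=C^{2,\alpha}(M,\overline M)$, the family of functionals $f_\lambda$ is given by \eqref{eq:intaction}, and the bundle of symmetry groups is $\mathcal G=\{G_\lambda:\lambda\in\Lambda\}$ with $G_\lambda=\Iso(M,\mathbf g_\lambda)\times\Iso(\overline M,\overline{\mathbf g}_\lambda)$, acting on $X$ by $(g,\overline g)\cdot\phi=\overline g\circ\phi\circ g^{-1}$. In contrast with the CMC setting of Subsection~\ref{subsec:cmc}, this action is genuinely smooth: left- and right-composition with $C^\infty$-diffeomorphisms are smooth operations on $C^{2,\alpha}$-maps by the results of Palais invoked in \cite{PicTau}, so no reparametrization loss occurs and we land directly in the regular case. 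In particular, each map $\beta_\phi^\lambda\colon G_\lambda\to X$ is smooth, and its differential at $1_\lambda$ sends $(K,\overline K)\in\mathfrak g_\lambda$ to $\overline K\circ\phi-\phi_*K\in T_\phi X$; continuity of $(\lambda,\phi)\mapsto\dd\beta_\phi^\lambda(1_\lambda)$ follows from the smoothness of $\mathcal G$ together with the smooth dependence of the metrics on $\lambda$.

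Next I would specify the auxiliary Fredholm structure of Definition~\ref{def:auxFredholm}. Take $\mathcal E$ to be the Banach bundle over $X$ whose fiber at $\phi$ is $C^{0,\alpha}(\phi^*T\overline M)$, with the obvious continuous inclusion $T_\phi X=C^{2,\alpha}(\phi^*T\overline M)\hookrightarrow\mathcal E_\phi$. The $L^2$-pairing
\begin{equation*}
\langle V,W\rangle_\phi=\int_M\overline{\mathbf g}_*(V,W)\,\vol_{\mathbf g_*},
\end{equation*}
associated to a fixed background pair $(\mathbf g_*,\overline{\mathbf g}_*)$, endows each fiber with a (noncomplete) inner product that is independent of $\lambda$. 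The first-variation formula \eqref{eq:primavarharm} then produces a gradient-like section $\delta f_\lambda(\phi)$, obtained from $-\Delta_{\mathbf g_\lambda,\overline{\mathbf g}_\lambda}(\phi)$ by the positive symmetric bundle endomorphism and density factor converting $\overline{\mathbf g}_\lambda\otimes\vol_{\mathbf g_\lambda}$ into $\overline{\mathbf g}_*\otimes\vol_{\mathbf g_*}$. At a critical point $\phi_0$, the vertical derivative of $\delta f_{\lambda_0}$ is (a nonzero multiple of) the Jacobi operator $J_{\phi_0}$ of \eqref{eq:harmjacobiop}; as a linear second-order elliptic operator on sections of $\phi_0^*T\overline M$, it is a Fredholm operator of index zero between the $C^{2,\alpha}$- and $C^{0,\alpha}$-scales, and its $L^2$-symmetry is encoded in \eqref{eq:abstractJacobi} via self-adjointness of the Laplacian together with the symmetry of the curvature term in \eqref{eq:harmjacobiop}.

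Finally I would match the two equivariant nondegeneracy conditions. Since $\dd\beta_{\phi_0}^{\lambda_0}(1_{\lambda_0})(K,\overline K)=\overline K\circ\phi_0-(\phi_0)_*K$, the subspace $\mathcal D_{\phi_0}^{\lambda_0}\subset T_{\phi_0}X$ is exactly the span of the Killing-Jacobi fields along $\phi_0$; thus equivariant nondegeneracy of $\phi_0$ as a harmonic map coincides with equivariant nondegeneracy in the abstract sense of the Theorem in the Introduction. All hypotheses of that theorem in its regular form being verified, its conclusion produces open neighborhoods $V\subset\Lambda$ of $\lambda_0$ and $\widetilde V\subset X$ of $\phi_0$, together with a smooth map $V\ni\lambda\mapsto\phi(\lambda)\in\widetilde V$, such that $\phi(\lambda_0)=\phi_0$ and $(\lambda,\phi)\in V\times\widetilde V$ is a critical point of $f_\lambda$ if and only if $\phi$ lies in the $G_\lambda$-orbit of $\phi(\lambda)$; these are precisely (a), (b), and the stated geometric equivalence. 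The most delicate point is really the Fredholm-index-zero requirement, which is not automatic for operators that are symmetric only with respect to an incomplete $L^2$-pairing on a Banach space, but is guaranteed here by the ellipticity criterion discussed in Appendix~\ref{app} applied to $J_{\phi_0}$.
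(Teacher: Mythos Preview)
Your proposal is correct and follows essentially the same approach as the paper: verify the hypotheses of the regular equivariant implicit function theorem with $X=C^{2,\alpha}(M,\overline M)$, the auxiliary bundle $\mathcal E_\phi=C^{0,\alpha}(\phi^*T\overline M)$ equipped with an $L^2$-pairing relative to a fixed background metric, gradient-like section $\delta f_\lambda$ built from $-\Delta_{\mathbf g_\lambda,\overline{\mathbf g}_\lambda}(\phi)$, and Jacobi operator $J_{\phi_0}$ as the vertical derivative, which is Fredholm of index zero by ellipticity. The only cosmetic differences are that the paper fixes only a background volume form $\vol_{\mathbf g_*}$ on $M$ (so $\delta f_\lambda=-\xi_{\mathbf g_\lambda}\Delta_{\mathbf g_\lambda,\overline{\mathbf g}_\lambda}(\phi)$ without a bundle endomorphism) and cites \cite{Whi2} directly for the Fredholm index rather than the criterion in Appendix~\ref{app}.
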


\begin{proof}
All assumptions of the equivariant implicit function theorem are satisfied by the
harmonic maps variational problem, using the following objects:
\begin{itemize}
\item $X'=X=C^{2,\alpha}(M,\overline M)$;

\item $\mathcal E$ is the mixed vector bundle whose fiber $\mathcal E_\phi$ is $C^{0,\alpha}(\phi^*T\overline M)$,
the Banach space of all $C^{0,\alpha}$-H\"older vector fields along $\phi$,
endowed with the topology $C^{2,\alpha}$ on the base and $C^{0,\alpha}$ on the fibers;

\item the inclusion $TX\subset\mathcal E$ is obvious, and the inner product on the fibers
$\mathcal E_x$ is induced by the $L^2$-pairing taken with respect to the
volume form (or density) of a fixed background Riemannian metric $\mathbf g_*$ on $M$;

\item $\delta f(\phi,\mathbf g)=-\xi_\mathbf g\,\Delta_{\mathbf g,\overline{\mathbf g}}(\phi)$, where
$\xi_\mathbf g\colon M\to\R$ is the positive $C^{k}$ function satisfying $\xi_\mathbf g\,\mathrm{vol}_{\mathbf g_*}=\mathrm{vol}_{\mathbf g}$, see \eqref{eq:deflaplacian} and \eqref{eq:primavarharm};

\item $\mathcal Y=TX$, $\kappa$ is the identity map and $j$ is induced by the $L^2$-pairing, as above;

\item identifying the Lie algebra $\mathfrak g_\lambda$ of $G_\lambda$ with the space of pairs $(K,\overline K)$ of (complete) Killing vector fields on $(M,\mathbf g)$ and $(\overline M,\overline{\mathbf g})$, for each $\phi\in X$ the map $\dd\beta_\phi^\lambda(1_\lambda)\colon\mathfrak g_\lambda\to T_\phi X$
associates to a pair $(K,\overline K)$ the vector field $\overline{K}\circ\phi-\phi_*(K)$ along $\phi$.
The image of this map is precisely the space of Killing-Jacobi fields along $\phi$, and the notion of equivariant nondegeneracy for harmonic maps
coincides with the abstract notion of nondegeneracy given in our equivariant implicit function theorem;

\item given a $(\mathbf g,\overline{\mathbf g})$-harmonic map $\phi\colon M\to\overline M$, the vertical projection
of the derivative $\partial_1(\delta f)(\phi,\mathbf g)$ is identified via \eqref{eq:ddfjacobi} with $\xi_\mathbf g\, J_\phi$, where $J_\phi$ is the
Jacobi operator in \eqref{eq:harmjacobiop}. This is an elliptic second order partial differential operator and
$\xi_\mathbf g\, J_\phi\colon C^{2,\alpha}(\phi^*T\overline M)\to C^{0,\alpha}(\phi^*T\overline M)$ is a
Fredholm operator of index zero, see \cite[Thm~1.1, (2)]{Whi2}.\qedhere
\end{itemize}
\end{proof}

\subsection{Deformation of Hamiltonian stationary Lagrangian submanifolds}
Let $(M,\omega)$ be a symplectic manifold with $\dim M=2n$ and $\Sigma$ be a compact manifold with $\dim\Sigma=n$. An embedding $x\colon\Sigma\to (M,\omega)$ is called \emph{Lagrangian} if $x^*\omega=0$. In this case, we say $x(\Sigma)\subset M$ is a Lagrangian submanifold.
A smooth family $x_s\colon\Sigma\to M$, $s\in\left(-\varepsilon,\varepsilon\right)$, of embeddings is called a \emph{Hamiltonian deformation} of $x_0=x$ if its derivative $X=\frac{\dd}{\dd s}x_s\big|_{s=0}$ is a Hamiltonian vector field along $x$, i.e., if the $1$-form $\sigma_X:=x^*(\omega(X,\cdot))$ on $\Sigma$ is exact.

Endow $(M,\omega)$ with a Riemannian metric $\mathbf g$. This metric allows us to compute the volume of an embedding $x\colon\Sigma\to M$, by pulling back its volume form $\mathrm{vol}_{\mathbf g}$.
A Lagrangian embedding $x\colon\Sigma\to M$ is called \emph{$\mathbf g$-Hamiltonian stationary} if it has critical volume with respect to any  Hamiltonian deformations $x_s\colon\Sigma\to M$, $s\in\left(-\varepsilon,\varepsilon\right)$, of $x$.
Hamiltonian stationary Lagrangian submanifolds are analogous to minimal submanifolds (i.e., the case $H=0$ in the CMC problem described in Subsection~\ref{subsec:cmc}), but instead of minimizing volume in \emph{all} directions, they minimize volume only in the \emph{Hamiltonian} directions. In particular, minimal Lagrangian submanifolds are automatically Hamiltonian stationary.

We now describe the basis for the variational setup of this problem, following closely \cite[Sec 4]{BetPicSan12}. Let $\lagsigma$ be the space of $C^{4,\alpha}$-unparametrized Lagrangian embeddings of $\Sigma$ in $(M,\omega)$, i.e., the quotient of the space of Lagrangian embeddings of $\Sigma$ into $M$ of class $C^{4,\alpha}$ by the action of the diffeomorphism group of $\Sigma$. The space $\lagsigma$ can be identified with the set of Lagrangian submanifolds of $M$ of class $C^{4,\alpha}$ that are diffeomorphic to $\Sigma$. Its structure is analogous to the set of unparametrized embeddings used in Subsection~\ref{subsec:cmc} to study CMC hypersurfaces, in that, given a smooth Lagrangian embedding $x_0\colon \Sigma\to M$, there exists a local chart around the unparametrized embedding $[x_0]\in\lagsigma$ with values in the Banach space $Z^1(\Sigma)$ of closed $1$-forms on $\Sigma$ of class $C^{4,\alpha}$. In particular, we get an identification
\begin{equation}\label{eq:identclosed}
T_{[x_0]}\lagsigma =Z^1(\Sigma).
\end{equation}
An important finite codimension subspace of $Z^1(\Sigma)$ is the space $B^1(\Sigma)$ of exact $1$-forms on $\Sigma$, of the same regularity. The corresponding distribution, via the identification \eqref{eq:identclosed}, is an integrable distribution of $\lagsigma$ with codimension $b_1(\Sigma)=\dim H^1(\Sigma,\R)$. This distribution is called \emph{Hamiltonian distribution}, and its integral leaves near $[x_0]$ are parametrized by elements of the first de Rham cohomology $H^1(\Sigma,\R)=Z^1(\Sigma)/B^1(\Sigma)$. Denote by $[\eta]\in H^1(\Sigma,\R)$ the cohomology class of a closed $1$-form $\eta$, and by $\lagsigma_{[\eta]}$ the integral leaf of the Hamiltonian distribution corresponding to $[\eta]$. Also, note that $\lagsigma_{[0]}$ is the space of Hamiltonian deformations of $[x_0]$.

Given a smooth unparametrized Lagrangian embedding $[x_0]\in\lagsigma$, let $\mathcal U$ be an open neighborhood of $[x_0]$, that is identified with a neighborhood of $0\in Z^1(\Sigma)$. Shrinking $\mathcal U$ if necessary, suppose that the natural splitting $Z^1(\Sigma)=B^1(\Sigma)\oplus H^1(\Sigma,\R)$ induces a product structure $\mathcal U=\mathcal U_B\times\mathcal U_H$, where $\mathcal U_B$ is a neighborhood of $0\in B^1(\Sigma)$ and $\mathcal U_H$ is a neighborhood of $[0]\in H^1(\Sigma,\R)$. Under this identification, each $[x]\in\mathcal{U}$ corresponds to a unique pair $(\zeta,[\eta])\in\mathcal U_B\times\mathcal U_H$, and $[x_0]$ corresponds to $(0,[0])$.

Set $X=\mathcal U_B$ and $\Lambda_1=\mathcal U_H$.  Let $\mathbf g_t$ be a family of metrics on $M$ of class $C^k$, $k\geq4$, parametrized by $t\in\Lambda_2$, where $\Lambda_2$ is a manifold. Consider the space of parameters $\Lambda=\Lambda_1\times\Lambda_2$, which is a finite-dimensional manifold formed by pairs $\lambda=([\eta],t)$. Let
\begin{equation}\label{eq:hamiltstfunct}
f\colon X\times\Lambda\to\R, \quad f(\zeta,[\eta],t)=\int_\Sigma x^*(\mathrm{vol}_{\mathbf g_t}),
\end{equation}
where $x$ is a Lagrangian embedding that represents the class $[x]=(\zeta,[\eta])\in\mathcal U=\mathcal U_B\times\mathcal U_H$ and $\mathrm{vol}_{\mathbf g_t}$ is the volume form of $(M,{\mathbf g}_t)$. It is easy to verify that the above is well-defined and, for a given $\lambda=([\eta],t)\in\Lambda$, critical points of $f_\lambda(\zeta)=f(\zeta,\lambda)$ are precisely the $\mathbf g_t$-Hamiltonian stationary Lagrangian submanifolds that belong to $\lagsigma_{[\eta]}$. This is just a somewhat intricate way of (locally) encoding the fact that the $\mathbf g_t$-Hamiltonian stationary Lagrangian submanifolds that belong to the leaf $\lagsigma_{[\eta]}$ of the Hamiltonian distribution are the critical points of the $\mathbf g_t$-volume functional restricted to $\lagsigma_{[\eta]}$. More precisely, for a Hamiltonian variation $X$, with $\sigma_X=\dd h$, of the embedding $[x]=(\zeta,[\eta])$,
\begin{equation}\label{eq:firstvarhamil}
\partial_1 f(\zeta,[\eta],t)h=\int_\Sigma \mathbf g_t(\vec H,X)=\int_\Sigma\mathbf g_t(\sigma_H,\dd h)=\int_\Sigma (\dd^*\sigma_H) h,
\end{equation}
where $\vec H$ is the mean curvature vector of $x$ and $\dd^*$ is the codifferential.\footnote{i.e., the formal adjoint of the exterior derivative operator $\dd$, with respect to $\mathbf{g}_t$.} Here, we identify the tangent space to $\mathcal U_B$ at $\zeta$ as $B^1(\Sigma)=C^{4,\alpha}(\Sigma)/\R$, i.e., an exact $1$-form $\dd h$ of class $C^{4,\alpha}$ is identified with the function $h$ modulo constants.

Under the extra assumption that $(M,\omega,\mathbf g_{t_0})$ is a K\"ahler manifold, with complex structure $\mathbf J_{t_0}$, the \emph{Jacobi operator} at a $\mathbf g_{t_0}$-Hamiltonian stationary Lagrangian embedding $x=(\zeta,[\eta])$ is given by the following formula (see \cite[(9), p. 1072]{jls}):
\begin{equation}\label{eq:hamiltstjacobi}
J_{\zeta}(h)=\Delta^2 h+\dd^* \sigma_{\Ric^\perp(\mathbf J_{t_0} \nabla h)}-2\dd^*\sigma_{B(\mathbf J_{t_0} \vec H,\nabla h)}-\mathbf J_{t_0} \vec H(\mathbf J_{t_0} \vec H(h)),
\end{equation}
where $B$ is the second fundamental form of $x(\Sigma)\subset M$ and $\Ric^\perp(X)$ for a normal vector $X$ to $\Sigma$ is defined by $\mathbf g_{t_0}(\Ric^\perp(X),Y)=\Ric(X,Y)$ for all $Y$ normal to $\Sigma$.
Again, the actual linear operator that represents the second variation of the above functional is defined in the tangent space to $\mathcal U_B$, which is $C^{4,\alpha}(\Sigma)/\R$. In other words, \eqref{eq:hamiltstjacobi} induces an operator $[J_{\zeta}]\colon C^{4,\alpha}(\Sigma)/\R \to C^{0,\alpha}(\Sigma)/\R$, since it acts on $\dd h$ and we are representing it by its action on $h$. Exact $1$-forms $\eta$ on $\Sigma$ that are in the kernel of the Jacobi operator at $x$ are called \emph{Jacobi fields}  along $x$.

Assume that $G_t=\mathrm{Iso}(M,\mathbf g_t)$ acts on $(M,\omega)$ in a Hamiltonian fashion, see \cite[Sec 2]{BetPicSan12}. As in the above cases, the functional $f$ is invariant under this action and there are special Jacobi fields that come from the symmetry group. Namely, if $K$ is a Killing field on $(M,\mathbf g_t)$, then the $1$-form $\sigma_K=x^*(\omega(K,\cdot))$ is exact, because the action is Hamiltonian. Moreover, since the action is by symplectomorphisms and $\mathbf g_t$-isometries, $\sigma_K$ is a Jacobi field.

\begin{defin}
Jacobi fields of the form $\sigma_K$ as above, where $K$ is a Killing field, are called \emph{Killing-Jacobi fields}. The $\mathbf g_t$-Hamiltonian stationary Lagrangian embedding $x$ is called \emph{equivariantly nondegenerate} if the space of Jacobi fields along $x$ coincides with the space of Killing-Jacobi fields along $x$.
\end{defin}

\begin{defin}
Two Hamiltonian stationary Lagrangian embeddings $x_i\colon \Sigma\to M$ are said to be \emph{congruent} if there exists a diffeomorphism $\phi$ of $\Sigma$ and an isometry $F\in G_t$, such that $x_2=F\circ x_1\circ\phi$.
\end{defin}

\begin{teo}\label{thm:hamstatlag}
Let $(M,\omega)$ be a symplectic manifold and $\mathbf g_t$ be a family of Riemannian metrics on $M$, parametrized by $t\in\Lambda_2$, for some manifold $\Lambda_2$. Assume that $G_t:=\mathrm{Iso}(M,\mathbf g_t)$ forms a smooth bundle of Lie groups $\mathcal G=\{G_t:t\in\Lambda_2\}$, and that the $G_t$-action on $(M,\omega)$ is Hamiltonian. Suppose $\mathbf g_{t_0}$ is a K\"ahler metric and $x_0\colon\Sigma\to M$ is an equivariantly nondegenerate $\mathbf g_{t_0}$-Hamiltonian stationary Lagrangian embedding. Then, there exists a neighborhood $V_1$ of $[0]\in H^1(\Sigma,\R)$, a neighborhood $V_2$ of $t_0\in \Lambda_2$, a neighborhood $W$ of $[x_0]$ in $\lagsigma$ and a smooth map $x\colon V\to W$, from the neighborhood $V=V_1\times V_2$ of $([0],t_0)$ to $W$, satisfying:
\begin{itemize}
\item[(a)] $x([\eta],t)$ is a $\mathbf g_t$-Hamiltonian stationary Lagrangian embedding which is in $\lagsigma_{[\eta]}$, for all $([\eta],t) \in V$;
\item[(b)] $x([0],t_0)=x_0$.
\end{itemize}
Moreover, given $([\eta],t)\in V$, any other $\mathbf g_t$-Hamiltonian stationary Lagrangian embedding in $\lagsigma_{[\eta]}$ sufficiently close to $x_0$ is congruent to $x(\lambda)$.
\end{teo}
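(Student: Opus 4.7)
The plan is to apply the non-regular equivariant implicit function theorem developed in Section~\ref{sec:nonregular} to the variational framework encoded by the functional $f$ in \eqref{eq:hamiltstfunct}. The strategy follows closely the pattern used in the proof of Theorem~\ref{thm:applCMC}: one identifies each abstract object ($X$, $\Lambda$, $\mathcal G$, $\mathcal E$, the inner product, $\delta f_\lambda$, $\mathcal Y$, $j$, $\kappa$ and the vertical derivative $J_{x_0}$) in the Hamiltonian stationary Lagrangian context, verifies the non-regular hypotheses, and then appeals to the abstract result.

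First I would fix the identifications. Take the Banach manifold $X=\mathcal U_B$, modeled on $C^{4,\alpha}(\Sigma)/\R$, and the parameter space $\Lambda=\Lambda_1\times\Lambda_2$ with base point $\lambda_0=([0],t_0)$. The bundle of Lie groups $\mathcal G=\{G_\lambda:\lambda\in\Lambda\}$ with $G_\lambda=G_t$ is the pullback by the projection $\Lambda\to\Lambda_2$ of the smooth bundle $\{G_t:t\in\Lambda_2\}$, hence itself a smooth (in particular continuous, in the sense of Definition~\ref{def:continuousliegps}) bundle of Lie groups. Because the $G_t$-action on $(M,\omega)$ is Hamiltonian, for every Killing field $K$ of $\mathbf g_t$ the $1$-form $\sigma_K=x^*(\omega(K,\cdot))$ is exact, so infinitesimal symmetries lie in $B^1(\Sigma)$; hence the $G_t$-orbit of $[x]$ stays within the leaf $\lagsigma_{[\eta]}$ through $[x]$, and $G_t$ acts (continuously, and smoothly at smooth unparametrized embeddings) on $X$. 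The $\mathbf g_t$-volume functional $f_\lambda(\zeta)=f(\zeta,[\eta],t)$ is then $G_t$-invariant, and its critical points in $X$ are precisely the $\mathbf g_t$-Hamiltonian stationary embeddings lying in $\lagsigma_{[\eta]}$ and close to $[x_0]$.

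Next I would set up the auxiliary Fredholm structure. Take $\mathcal E$ to be the Banach bundle over $X$ with fiber $C^{0,\alpha}(\Sigma)/\R$, equipped with the $L^2$-pairing induced by a fixed background metric $\mathbf g_*$ on $\Sigma$; let $\mathcal Y$ be the corresponding bundle with fiber $C^{1,\alpha}(\Sigma)/\R$, with $\kappa\colon TX\to\mathcal Y$ the inclusion and $j\colon\mathcal E\to\mathcal Y^*$ induced by the $L^2$-pairing. By \eqref{eq:firstvarhamil}, the section $\delta f_\lambda$ is represented (up to the positive density factor $\xi_{\mathbf g_t}$ comparing $\vol_{\mathbf g_t}$ with $\vol_{\mathbf g_*}$) by the class $[\dd^*\sigma_{\vec H}]$ in $C^{0,\alpha}(\Sigma)/\R$. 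At a Hamiltonian stationary critical point, the vertical derivative of $\delta f_\lambda$ is the induced operator $[J_\zeta]\colon C^{4,\alpha}(\Sigma)/\R\to C^{0,\alpha}(\Sigma)/\R$ built from the fourth-order linear operator \eqref{eq:hamiltstjacobi}. Because $x_0$ is $\mathbf g_{t_0}$-Hamiltonian stationary and $\mathbf g_{t_0}$ is K\"ahler, $J_\zeta$ at $\zeta=0$ is a strongly elliptic, formally self-adjoint differential operator of order $4$ on the compact manifold $\Sigma$, and the quotient by $\R$ (on both sides) is a harmless finite-dimensional adjustment; thus $[J_0]$ is a Fredholm operator of index $0$ satisfying \eqref{eq:abstractJacobi}. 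Finally, the image of $\mathrm d\beta_{[x]}^\lambda(1_\lambda)$ at a smooth Lagrangian embedding consists precisely of the Killing-Jacobi fields $\sigma_K$ along $x$, so the assumed equivariant nondegeneracy of $x_0$ is exactly the abstract equivariant nondegeneracy hypothesis.

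The main obstacle I expect is precisely the one that forces use of the non-regular machinery: the action by left-composition of $G_t$ on unparametrized $C^{4,\alpha}$-embeddings is only continuous on $X$ and differentiable at $1_\lambda$ only for $[x]$ in the dense subset $X'$ of smooth unparametrized embeddings. As in Subsection~\ref{sub:nonreggroupactions}, one checks that the densely defined assignment $(\lambda,[x])\mapsto\kappa_{[x]}\circ\dd\beta_{[x]}^\lambda(1_\lambda)\in\mathrm{Hom}(\mathfrak g_\lambda,\mathcal Y_{[x]})$ extends continuously to all of $\Lambda\times X$, using the fact that the map $K\mapsto[\sigma_K]$ depends continuously on $[x]\in X$ and linearly on $K\in\mathfrak g_\lambda$; the invariance of the zero set of $\delta f_\lambda$ required in Subsection~\ref{subsec:actionhomeos} follows because $G_t$ acts on $\mathcal E$ by lifting the action and $\delta f_\lambda$ is equivariant. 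With these ingredients in place, the non-regular equivariant implicit function theorem produces the neighborhood $V=V_1\times V_2$ of $([0],t_0)$, the smooth curve $\lambda\mapsto x(\lambda)$ of Hamiltonian stationary solutions in the prescribed leaves, and the uniqueness up to the $G_t$-action, which in the present context is congruence by an element of $\mathrm{Iso}(M,\mathbf g_t)$ (combined with the diffeomorphism $\phi$ of $\Sigma$ implicit in the unparametrized formulation).
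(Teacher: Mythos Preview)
Your proposal is correct and follows essentially the same route as the paper: both apply the non-regular equivariant implicit function theorem of Section~\ref{sec:nonregular} to the variational setup \eqref{eq:hamiltstfunct}, with the same choices for $X=\mathcal U_B$, $\Lambda=\mathcal U_H\times\Lambda_2$, $\mathcal E=C^{0,\alpha}(\Sigma)/\R$, the $L^2$-pairing against a background volume form, $\delta f_\lambda=\xi_{\mathbf g_t}\,\dd^*\sigma_H$, and the Jacobi operator \eqref{eq:hamiltstjacobi} as the vertical derivative. The only discrepancy is that you take $\mathcal Y$ with fiber $C^{1,\alpha}(\Sigma)/\R$ (mirroring the CMC proof), whereas the paper takes $C^{3,\alpha}(\Sigma)/\R$; since $\sigma_K=x^*(\omega(K,\cdot))$ for $x\in C^{4,\alpha}$ actually lies in $C^{3,\alpha}$, either choice satisfies the axioms of Subsection~\ref{sub:nonreggroupactions}, so this is immaterial.
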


\begin{proof}
Similarly to Theorem~\ref{thm:applCMC}, this is an application of the non-regular version of the equivariant implicit function theorem discussed in Section~\ref{sec:nonregular}. As above, consider a local chart $\mathcal U=\mathcal U_B\times\mathcal U_H\subset B^1(\Sigma)\times H^1(\Sigma,\R)$ of $[x_0]\in\lagsigma$, where $[x_0]$ corresponds to $(0,[0])$, and set the manifold $X$ to be the neighborhood $\mathcal U_B$ of the origin in the space of exact $1$-forms on $\Sigma$ of class $C^{4,\alpha}$. The space of parameters is $\Lambda=\mathcal U_H\times\Lambda_2$, and the functional $f$ is given by \eqref{eq:hamiltstfunct}. Critical points of $f_\lambda\colon X\to\R$, $\lambda=([\eta],t)$, are the (classes of) $g_t$-Hamiltonian stationary Lagrangian embeddings that belong to $\lagsigma_{[\eta]}$.

Analogously to the CMC setup, the map $\beta_{\zeta}^t\colon G_t\to X$ is smooth when $[x]=(\zeta,[\eta])$ is in the dense subset of (classes of) smooth embeddings. The image of $\dd\beta_{\zeta}^t(1_t)$ is the space of Killing-Jacobi fields, so that the equivariant nondegeneracy assumption on $[x]$ is precisely the equivariant nondegeneracy assumption of our implicit function theorem.

The remaining objects used to apply the abstract implicit function theorem are:
\begin{itemize}	
\item $\mathcal E=C^{0,\alpha}(\Sigma)/\R$ is the Banach space of $C^{0,\alpha}$ functions on $\Sigma$ modulo constants. The tangent space to $T_\zeta \mathcal U_B$ is canonically identified with the space of exact $1$-forms on $\Sigma$ of class $C^{4,\alpha}$, hence the inclusion $TX\subset\mathcal E$ is by first identifying such an exact form $\dd h$ with the $C^{4,\alpha}$ function $h$, and then using the inclusion $C^{4,\alpha}\hookrightarrow C^{0,\alpha}$;
\item The pairing $\langle\cdot,\cdot\rangle$ on $\mathcal E$ is the natural $L^2$-pairing of functions, $\langle h_1,h_2\rangle=\int_\Sigma h_1\,h_2\vol_{\mathbf{g}_*}$, with respect to the volume form (or density) of a fixed background Riemannian metric $\mathbf g_*$;
\item $\mathcal Y=C^{3,\alpha}(\Sigma)/\R$, the bundle morphism $\kappa\colon TX\to \mathcal Y$ is the obvious inclusion and $j\colon\mathcal E\to \mathcal Y^*$ is induced by the $L^2$-pairing above;
\item Given $\lambda=([\eta],t)$, identifying the Lie algebra $\mathfrak g_t$ of $G_t$ with the space of (complete) Kiling vector fields on $(M,\mathbf g_t)$, when $\zeta\in\mathcal U_B$ is such that $[x]=(\zeta,[\eta])$ is smooth, then $\dd\beta_\zeta^t(1_t)\colon\mathfrak g_t \to T_\zeta \mathcal U_B$ associates to a Killing field $K$ the exact $1$-form $\sigma_K$;
\item $\delta f_\lambda(\zeta)=\xi_{\mathbf{g}_t} \dd^*\sigma_H$, where $\sigma_H=x^*(\omega(H,\cdot))$ and $x$ is an embedding in the class of $[x]=(\zeta,[\eta])$, if $\lambda=([\eta],t)$ and $\xi_{\mathbf{g}_t}$ is the positive function satisfying $\xi_{\mathbf{g}_t}\vol_{\mathbf{g}_*}=\vol_{\mathbf{g}_t}$, see \eqref{eq:firstvarhamil};
\item the vertical derivative of $\delta f_\lambda$ at $\zeta$ is identified via \eqref{eq:ddfjacobi} with the Jacobi operator $[J_\zeta]$ induced by \eqref{eq:hamiltstjacobi}. By this formula, $J_\zeta$ is a linear elliptic operator of fourth-order, hence Fredholm of index $0$ from $C^{4,\alpha}(\Sigma)$ to $C^{0,\alpha}(\Sigma)$. The induced operator modulo constants, $[J_{\zeta}]\colon C^{4,\alpha}(\Sigma)/\R \to C^{0,\alpha}(\Sigma)/\R$, remains Fredholm of index $0$, see \cite[Sec 4.4]{BetPicSan12}.\qedhere
\end{itemize}
\end{proof}
\end{section}

\appendix
\section{Symmetric Fredholm operators}\label{app}

Consider the following setup:
\begin{itemize}
\item $X$ and $Y$ are Banach spaces;
\item $H$ is a Hilbert space, with inner product $\langle\cdot,\cdot\rangle$;
\item there exist continuous injections $X\hookrightarrow Y\hookrightarrow H$ with dense images;
\item $T\colon X\to Y$ is a (bounded) Fredholm operator;
\item $T$ is symmetric, i.e., $\langle Tx_1,x_2\rangle=\langle x_1,Tx_2\rangle$ for all $x_1,x_2\in X$.
\end{itemize}
It is natural to conjecture that, with the above hypotheses, the Fredholm index of $T$ is zero.
However, this is \emph{false} in general, as shown by the following counter-example.
\begin{exampleA}
Set $H=L^2([0,1])\oplus\mathds R$, and $Y=H$.
Let $u\colon\left]0,1\right]\to\mathds R$ be the function $u(x)=\frac1x$, and let $D\subset L^2([0,1[)$ be the
dense subspace given by $D=\{f\in L^2([0,1]):fu\in L^2([0,1])\}$, endowed with the inner product $\langle f,g\rangle_* = \langle f,g\rangle_{L^2} + \langle fu,gu\rangle_{L^2}$. In other words, this is the inner product that makes $D$ isometric to the graph of the
linear operator $D\ni f\mapsto fu\in L^2([0,1])$.
This is a closed operator, hence $D$ is a Hilbert space when endowed with $\langle\cdot,\cdot\rangle_*$.

Let $\alpha\colon D\to\mathds R$ be the bounded linear functional $\alpha(f)=\int_0^1fu$. Clearly, $\alpha$ is continuous in the topology induced by $\langle\cdot,\cdot\rangle_*$, but discontinuous
in the topology of $H$. Let $X\subset D\oplus\mathds R$ be the graph of $\alpha$. This is a closed subspace
of $D\oplus\mathds R$, when $D$ is endowed with the inner product $\langle\cdot,\cdot\rangle_*$,
and therefore it is a Hilbert space. Moreover, $X$ is dense in $D\oplus\mathds R$ (hence in $Y=H$)
when $D$ is endowed with the topology induced by $H$,
because it is the kernel of the unbounded functional $D\oplus\mathds R\ni(f,t)\mapsto t-\alpha(f)\in\mathds R$.

Let $S\colon D\oplus\mathds R\to H$ the linear map defined by $S(f,c)=(fu,0)$. Then, $S$ is symmetric relatively to the inner product of $H$, and continuous when $D$ is endowed with the topology induced by $\langle\cdot,\cdot\rangle_*$. The kernel of $S$ is $\{0\}\oplus\R$, and the image of $S$ is $L^2\big([0,1]\big)\oplus\{0\}$. Let $T\colon X\to H$ be the restriction of $S$ to $X$. Also
$T$ is symmetric relatively to the inner product of $H$ and continuous in the topology induced by $\langle\cdot,\cdot\rangle_*$.
The kernel of $T$ is $X\cap(\{0\}\oplus\mathds R)=\{0\}$, and the image of $T$ is again $L^2\big([0,1]\big)\oplus\{0\}$, which
is a closed subspace of $H$ having codimension $1$. Thus, $T$ is a Fredholm operator of index $-1$.
\end{exampleA}
Let us show that the above ``conjecture'' is true, provided that additional assumptions be made.
Consider the adjoints of the continuous inclusions $X\hookrightarrow Y\hookrightarrow H$,
which provide continuous inclusions of the dual spaces: $H^*\hookrightarrow Y^*\hookrightarrow X^*$.
Identifying $H$ with $H^*$, we have the chain of inclusions
\[X\hookrightarrow Y\hookrightarrow H=H^*\hookrightarrow Y^*\hookrightarrow X^*.\]
Due to these inclusions and the symmetry of $T$, the adjoint operator $T^*\colon Y^*\to X^*$ is an extension
of $T$.

\begin{propA}
Let $X$, $Y$, $H$, and $T\colon X\to Y$ be as above. In addition, assume
 that the following \emph{ellipticity} hypothesis is satisfied:
\begin{equation}\label{eq:addhyp}
\mbox{For all } x\in Y^*,\ T^*(x)\in Y\ \text{implies}\ x\in X,
\end{equation}
i.e., $(T^*)^{-1}(Y)\subset X$. Then, the Fredholm index of $T$ is zero.
\end{propA}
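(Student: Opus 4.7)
The strategy is to identify the cokernel of $T$ with the kernel of its Banach adjoint $T^{*}\colon Y^{*}\to X^{*}$, then use the symmetry together with the ellipticity hypothesis \eqref{eq:addhyp} to show that this kernel coincides with $\ker T$. Once this is done, finiteness from the Fredholm assumption immediately gives index zero.

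First, I would unpack how $T^{*}$ relates to $T$ through the chain $X\hookrightarrow Y\hookrightarrow H=H^{*}\hookrightarrow Y^{*}\hookrightarrow X^{*}$. All of these inclusions are injective: the first two by hypothesis, and the dual ones because $X$ is dense in $Y$ and $Y$ is dense in $H$ (so the restriction maps on duals are injective). For $x\in X$, viewed as an element of $Y^{*}$ via the above embeddings, and any $x'\in X$ one has
\begin{equation*}
(T^{*}x)(x')=\langle x,Tx'\rangle=\langle Tx,x'\rangle,
\end{equation*}
by symmetry. Thus, as functionals on $X$, $T^{*}x$ agrees with the image of $Tx\in Y$ under $Y\hookrightarrow X^{*}$. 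This says $T^{*}$ extends $T$, and in particular $\ker T\subset \ker T^{*}$.

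Next I would establish the cokernel-kernel duality. Because $T$ is Fredholm, $\mathrm{Im}\,T$ is closed in $Y$, so by Hahn–Banach
\begin{equation*}
(Y/\mathrm{Im}\,T)^{*}\;\cong\;(\mathrm{Im}\,T)^{\circ}\;=\;\bigl\{\phi\in Y^{*}:\phi(Tx)=0\ \text{for all } x\in X\bigr\}\;=\;\ker T^{*}.
\end{equation*}
Since $\dim\mathrm{coker}\,T<\infty$, we get $\dim\mathrm{coker}\,T=\dim\ker T^{*}$.

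The main point — and where the ellipticity hypothesis does its work — is the reverse inclusion $\ker T^{*}\subset \ker T$. If $x\in Y^{*}$ satisfies $T^{*}x=0$, then in particular $T^{*}x\in Y$ (it is $0$), so \eqref{eq:addhyp} yields $x\in X$. Using that $T^{*}$ extends $T$, the equation $T^{*}x=0$ in $X^{*}$ becomes $Tx=0$ as an element of $X^{*}$; but $Tx\in Y$ and the embedding $Y\hookrightarrow X^{*}$ is injective by the density arguments above, so $Tx=0$ in $Y$. Hence $\ker T^{*}=\ker T$, and combining with the cokernel computation,
\begin{equation*}
\mathrm{ind}(T)=\dim\ker T-\dim\mathrm{coker}\,T=\dim\ker T-\dim\ker T^{*}=0.
\end{equation*}

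The delicate step — which is where the counterexample preceding the proposition is ruled out — is exactly the inclusion $\ker T^{*}\subset X$. Without the ellipticity hypothesis, $T^{*}$ could have genuinely larger kernel living outside $X$ (as in the example, where a one-dimensional excess of cokernel appears precisely because the adjoint picks up a functional not detectable by $X$). Everything else in the argument is routine Banach space duality; the entire content of the proposition lies in converting \eqref{eq:addhyp} into the regularity statement ``$T^{*}$-null vectors in $Y^{*}$ are in $X$,'' which then matches them with $T$-null vectors.
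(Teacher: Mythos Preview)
Your proof is correct and takes a genuinely different route from the paper's. The paper argues by producing the explicit splitting $Y=\mathrm{Im}(T)\oplus\ker(T)$: given $y\in Y$, it orthogonally projects $y$ in the Hilbert space $H$ onto the $H$-closure of $\mathrm{Im}(T)$, and then invokes the ellipticity hypothesis \emph{twice}---once to show the orthogonal complement piece lies in $\ker T$, and once more (via the closed-range identity $(\ker T)^{\circ}=\mathrm{Im}(T^{*})$) to pull the projected piece back into $\mathrm{Im}(T)$. Your argument instead stays entirely at the level of Banach duality: you identify $\dim\mathrm{coker}\,T$ with $\dim\ker T^{*}$ via the annihilator, and then use ellipticity a single time, applied to the trivial case $T^{*}x=0\in Y$, to force $\ker T^{*}\subset\ker T$. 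Your approach is a bit more economical and makes the role of the hypothesis sharper (it is used only at null vectors of $T^{*}$); the paper's approach has the advantage of exhibiting the concrete decomposition $Y=\mathrm{Im}(T)\oplus\ker(T)$, which is a slightly stronger statement than index zero and makes the Hilbert structure do visible work.
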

\begin{proof}
It suffices to show that $Y$ is the direct sum of $\mathrm{Im}(T)$ and $\ker(T)$, where $\mathrm{Im}(T)$ is the image of $T$.
First, the symmetry of $T$ implies that $\mathrm{Im}(T)$ and $\ker(T)$ are orthogonal, hence $\mathrm{Im}(T)\cap\ker(T)=\{0\}$. Given any $y\in Y$, let us show that $y\in\ker(T)+\mathrm{Im}(T)$. Let $V$ denote the closure of $\mathrm{Im}(T)$ in $H$,
and let $z$ be the orthogonal projection of $y$ onto $V$. Then, $y-z\in H$, it is orthogonal to $\mathrm{Im}(T)$, thus
$T^*(y-z)=0$. By hypothesis \eqref{eq:addhyp}, $y-z\in X$ and $T(y-z)=0$, therefore  $y-z\in\ker(T)$.
It remains to show that $z\in\mathrm{Im}(T)$. Since $y\in Y$ and $y-z\in X$, then $z\in Y$.
Moreover, $z\in V$, therefore $z$ is orthogonal to $\ker(T)$, i.e., the linear functional $\langle z,\cdot\rangle$ annihilates $\ker(T)$.
Since $\mathrm{Im}(T)$ is closed, the annihilator of $\ker(T)$ is precisely the image of $T^*$, and therefore there exists $w\in Y^*$ such that $T^*(w)=z$.
But $z\in Y$, and by \eqref{eq:addhyp}, $w\in X$, hence $z\in\mathrm{Im}(T)$.
\end{proof}

We conclude with a remark on the interpretation of the ``ellipticity'' assumption \eqref{eq:addhyp}. In concrete examples, $H$ is an $L^2$-space, $T$ is a formally self-adjoint differential operator between Banach spaces $X$ and $Y$, of functions having a certain regularity, and $T^*$ will be the extension of $T$ to some space of distributions. Given $y\in Y$, \emph{weak solutions} of the equality $T(x)=y$ are vectors $x\in Y^*$ satisfying $T^*(x)=y$. Assumption \eqref{eq:addhyp} means that, when $y\in Y$, weak solutions $x\in Y^*$ of the equality $T(x)=y$ also belong to $X$;
hence are are in fact \emph{true solutions} of the equation. This hypoellipticity property is satisfied, e.g., by linear elliptic differential operators with the appropriate choices of functions spaces $X$ and $Y$.


\begin{thebibliography}{99}
\bibitem{AliPic10} \textsc{L. J. Al\'\i as and P. Piccione},
\emph{On the manifold structure of the set of unparametrized embeddings with low regularity},
Bull.\ Braz.\ Math.\ Soc.\ (N.S.) 42 (2011), no.\ 2, 171--183.

\bibitem{BetPicSic1} \textsc{R. G. Bettiol, P. Piccione and G. Siciliano},
\emph{On the equivariant implicit function theorem with low regularity and applications to geometric variational problems}, arXiv:1009.5721, to appear in Proc. Edinb. Math. Soc.

\bibitem{BetPicSan12}{\textsc{R. G. Bettiol, P. Piccione and B. Santoro}, \emph{Equivariant deformations of Hamiltonian stationary Lagrangian submanifolds}, arXiv:1302.6970, to appear in Mat. Contemp.}

\bibitem{CerMasMic91} \textsc{V. Cervera, F. Mascar\'o and P. W. Michor},
\emph{The action of the diffeomorphism group on the space of immersions}, Diff.\ Geom.\  Appl.\ 1 (1991), 391--401.

\bibitem{coppersmith}{\textsc{D. Coppersmith}, \emph{Deformations of Lie subgroups}, Trans.\ Amer.\ Math.\ Soc.\ 233 (1977), 355--366.}

\bibitem{cf}{\textsc{M. Crainic \and R. L. Fernandes}, \emph{Integrability of Lie brackets}, Ann.\ of Math.\ (2) 157 (2003), no. 2, 575--620.}

\bibitem{Dan1} \textsc{E. N. Dancer}, \emph{The $G$-invariant implicit function theorem in infinite dimension},
Proc.\ Roy.\ Soc.\ Edinburgh Sect.\ A 92 (1982), no.\ 1-2, 13--30.

\bibitem{Dan2} \textsc{E. N. Dancer}, \emph{The $G$-invariant implicit function theorem in infinite dimension. II.}, Proc.\ Roy.\ Soc.\ Edinburgh Sect.\ A {102} (1986), no.\ 3-4, 211--220.

\bibitem{bd}{{\sc B. Daniel}, \emph{Isometric immersions into $S^n\times\mathds R$ and $H^n\times \mathds R$ and applications to minimal surfaces.} Trans. Amer. Math. Soc. 361 (2009), no. 12, 6255--6282.}

\bibitem{dl}{\textsc{A. Douady \and M. Lazard}, \emph{Espaces fibr\'es en alg\`ebres de Lie et en groupes}, Invent. Math. 1 1966 133--151.}

\bibitem{hw1}{{\sc D. Hoffman \and B. White}, \emph{Axial minimal surfaces in $S^2\times\mathds R$ are helicoidal}, J. Differential Geom. 87 (2011), no. 3, 515--523}

\bibitem{hw2}{{\sc D. Hoffman, M. Traizet \and B. White}, \emph{Helicoidal minimal surfaces of prescribed genus, I, II}, arXiv:1304.5861, arXiv:1304.6180.}

\bibitem{jls}{{\sc D. Joyce, Y.-I. Lee \and R. Schoen}, \emph{On the existence of Hamiltonian stationary Lagrangian submanifolds in symplectic manifolds}. Amer. J. Math. 133 (2011), no. 4, 1067--1092.}

\bibitem{mac}{\textsc{L. Mackenzie}, \emph{Lie groupoids and Lie algebroids in differential geometry.} London Mathematical Society Lecture Note Series, 124. Cambridge University Press, Cambridge, 1987.}

\bibitem{mr}{{\sc W. Meeks \and H. Rosenberg}, \emph{The theory of minimal surfaces in $M\times\mathds R$},
Comment. Math. Helv. 80 (2005), no. 4, 811--858.}

\bibitem{mo}{\textsc{I. Moerdijk}, \emph{On the classification of Regular Lie groupoids}, preprint 1228, Department of Mathematics, Utrecht University, 2002, pp. 57.}

\bibitem{momr}{\textsc{I. Moerdijk \and J. Mr\v{c}un}, \emph{Introduction to foliations and Lie groupoids.} Cambridge Studies in Advanced Mathematics, 91. Cambridge University Press, Cambridge, 2003.}

\bibitem{momr2}{\textsc{I. Moerdijk \and J. Mr\v{c}un}, \emph{On integrability of infinitesimal actions}, Amer. J. Math. 124 (2002), no. 3, 567--593. }

\bibitem{momr3}{\textsc{I. Moerdijk \and J. Mr\v{c}un}, \emph{On the integrability of Lie subalgebroids}, Adv. Math. 204 (2006), no. 1, 101--115.}

\bibitem{mora}{{\sc F. Morabito}, \emph{A Costa-Hoffman-Meeks type surface in $\mathds H^2 \times \mathds R$.} Trans. Amer. Math. Soc. 363 (2011), 1--36.}

\bibitem{palais} \textsc{R. S. Palais}, \emph{Foundations of Global Nonlinear Analysis}, W. A. Benjamin, 1968.

\bibitem{PicTau} \textsc{P. Piccione \and D. V. Tausk}, \emph{On the Banach differential structure for
sets of maps on non-compact domains}, Nonlinear Anal.\ {46} (2001), no.\ 2, Ser.\ A: Theory Methods, 245--265.

\bibitem{richardson}{\textsc{R. W. Richardson, Jr.}, \emph{Deformations of Lie subgroups and the variation of isotropy subgroups.} Acta Math. 129 (1972), 35--73.}

\bibitem{TasUmeYam} \textsc{H. Tasaki, M. Umehara and K. Yamada}, \emph{Deformations of symmetric spaces of compact type to their noncompact duals}, Japan J.\ Math.\ (N.S.) {17} (1991), no.\ 2, 383--399.

\bibitem{UmaYam} \textsc{M. Umehara and K. Yamada}, \emph{A deformation of tori with constant mean curvature in $\mathds R^3$ to those in other space forms}, Trans.\ AMS \ {330}, No.\ 2 (1992), 845--857.

\bibitem{weinstein}{\textsc{A. Weinstein}, \emph{Linearization of regular proper groupoids}, J. Inst.\ Math.\ Jussieu 1 (2002), no.\ 3, 493--511.}

\bibitem{wente1984} \textsc{H. Wente}, \emph{Counterexample to a conjecture of H. Hopf}, Pacific J. Math.\ 121 (1986), no.\ 1, 193--243.

\bibitem{Whi2} \textsc{B.~White}, \emph{The space of minimal submanifolds for varying Riemannian
  metrics}, Indiana Univ.\ Math.\ J.\ {40} (1991), 161--200.

\end{thebibliography}
\end{document}